\newcommand{\fqH}[1]{R^{\Lambda_{#1}}}   % finite quiver Hecke algebra
\newcommand{\df}{\operatorname{def}}
\newcommand{\proj}{\operatorname{proj}}
\newcommand{\spn}{\operatorname{span}}
\renewcommand{\End}{\operatorname{End}}
\newcommand{\Top}{\operatorname{Top}}
\newcommand{\wt}{{\rm wt}}
\newcommand{\Ht}{{\rm ht}}
\newcommand{\Rad}{{\rm Rad}}
\newcommand{\I}{I}   % index set
\newcommand{\rlQ}{\mathsf{Q}}   % root lattice
\newcommand{\wlP}{\mathsf{P}}   % weight lattice
\newcommand{\weyl}{\mathsf{W}}  % Weyl group
\newcommand{\cmA}{\mathsf{A}}  % Cartan matrix
\newcommand{\Par}{\mathcal{P}}   % partition
\DeclareMathOperator{\cont}{cont} % content of a partition
\newcommand{\ST}[1]{\mathsf{Std}(#1)}   % standard tableaux
\newcommand{\res}[1]{\mathrm{res}(#1)}   % residue sequence
\newcommand\tabupto[2]{#1_{\downarrow#2}}
\newcommand\shp[2]{\operatorname{Shp}(\tabupto{#1}{#2})}
\newcolumntype{M}[1]{>{\centering\arraybackslash}m{#1}}
\renewcommand{\mod}{{\rm \text{-}mod}}
\newcommand{\xik}[2]{\xi_{#1,#2}}
\title{Representation type of level $1$ KLR algebras in type $C$}
\author{Christopher Chung\\\normalsize Okinawa Institute of Science and Technology\\\normalsize Okinawa, Japan 904-0495 \\\texttt{\normalsize christopher.chung@oist.jp}\\[11pt]
Berta Hudak\\\normalsize Okinawa Institute of Science and Technology\\\normalsize Okinawa, Japan 904-0495 \\\texttt{\normalsize berta.hudak@oist.jp}\\[11pt]}
\date{}
\begin{document}

\renewcommand\auth{Chris Chung \& Berta Hudak}

\runninghead{Representation type of level $1$ KLR algebras in type $C$}

\msc{
	05E10, 20C08, 81R10
}

\toptitle 

\begin{abstract}
We determine the representation type for block algebras of the quiver Hecke algebras $\fqH{k}(\beta)$ of type $C^{(1)}_\ell$ for all $k$, generalising results of Ariki--Park for $\La = \La_0$.
\end{abstract}

\section{Introduction}

\emph{KLR algebras}, also called \emph{(affine) quiver Hecke algebras}, were introduced by Khovanov--Lauda \cite{kl09} and Rouquier \cite{rouq08} to give a categorification of the negative half of quantum groups. KLR algebras have natural finite-dimensional quotients $ R^\La(\beta) $ for a fixed dominant integral weight $ \La \in \wlP^+ $ and varying $ \beta \in \rlQ^+ $ a non-negative integral linear combination of simple roots, called the \emph{cyclotomic KLR algebras}, or \emph{cyclotomic quiver Hecke algebras}. The module categories of cyclotomic KLR algebras $ R^\La(\beta) $ for various $ \beta $, together with the induction and restriction functors between them give a categorification for the irreducible highest weight module $V(\La)$ over a quantum group. Brundan and Kleshchev \cite{bkisom} showed that the block algebras of the cyclotomic Hecke algebras are isomorphic to the cyclotomic KLR algebras of type $ A_{\ell}^{(1)} $, and so cyclotomic KLR algebras are a vast generalization of certain cyclotomic Hecke algebras, which are well understood.

In affine type $ A^{(1)}_{\ell} $, further advances have been made in understanding the structure of cyclotomic KLR algebras. In addition to the Brundan--Kleshchev isomorphism theorem, we also have a categorification theorem due to Ariki \cite{ariki96}, Brundan and Kleshchev which implies that in characteristic zero the simple objects in these categories correspond to the dual canonical basis of the integral highest weight module $V_{\bba} (\La)$ over the affine type $A$ Kac--Moody Lie algebra, using a Fock space construction.
Beyond type $A$, there is still much to discover about cyclotomic KLR algebras and their block algebras.
Classically, the representation type of block algebras of the Iwahori--Hecke algebra of the symmetric group was described by Erdmann and Nakano. % For a given quantum characteristic $\ell$, the Nakayama conjecture proved by James and Mathas tells us that the block algebras are parametrized by pairs of an $ \ell $-core partition and an $ \ell $-weight. The block algebras categorify weight spaces of the integrable $ U_q(\widehat{\mathfrak{sl}_\ell}) $-module $ V(\La_0) $ of highest weight $ \La = \La_0$, the zeroth fundamental weight. This can be viewed as the inspiration for the LLT conjecture by Lascoux, Leclerc and Thibon, and the starting point of Fock space theory. Fock spaces are certain infinite-dimensional representations for important algebras. Following work of Hayashi and Misra-Miwa \cite{MM90}, $ V(\La_0) $ can be identified as a Fock space with an action of the quantum affine algebra of type $ A^{(1)}_{\ell - 1} $ given in terms of the combinatorics of Young diagrams. Weights found in $ V(\La_0) $ have the form $ w \La_0 - k \delta $ where $ w \in W $ is an element of the Weyl group and $ k $ is a non-negative integer. Through the Misra-Miwa Fock space realization, we can view $ w \La_0 $ as the $ \ell $-core partition and $ k $ as the $ \ell $-weight. Through the Brundan-Kleshchev isomorphism, this led to a Lie theoretic approach to investigating the modular representation theory of the cyclotomic Hecke algebras, of which special cases include the finite Hecke algebras of types $B$ and $C$, and hence type $D$ as well via a Clifford embedding into type $B$.\\
Beyond type $A$, a general representation type classification in the style of Erdmann--Nakano for the block algebras of cyclotomic KLR algebras is a subject of active research.
For the special case $ \La = \La_0 $, this has been investigated in a series of recent papers by Ariki--Iijima--Park for type $ A^{(1)}_{\ell} $ and Ariki--Park for types  $ A_{2l}^{(2)}, D^{(2)}_{\ell + 1} $ and $ C^{(1)}_\ell $ \cite{APA1,APA2,apd,apc}, resulting in a Lie theoretic classification of representation type for $ R^{\La_0} (\beta) $ in the spirit of Erdmann--Nakano.
In affine types $ A $ (both twisted and untwisted) and $ D^{(2)} _{\ell+1} $, the representation types turned out to be governed by the weight, also called defect, introduced by Fayers \cite{fay06weights}, as a natural generalisation of the classification by Erdmann--Nakano.
In affine type $C$ Ariki--Park showed that $ R^{\La_0} (\delta) $ is no longer of finite representation type; in fact the algebra $ R^{\La_0} (\delta) $ is wild except in type $C^{(1)}_2$, where it is tame \cite{apc}.
Also, for arbitrary fundamental weight $ \La = \La_k $ (i.e.~level $1$) in untwisted affine ADE type, the set of maximal weights $ \max(\La) $ of $ V(\La) $ form a single Weyl group orbit \cite[Lem.~12.6]{kac}%, and so natural generalizations of $ \ell $-cores and $ \ell $-weights exist
.
The same is not true for affine type $ C $; as in this case $ \max(\La_k) $  consists of several Weyl group orbits, whose maximal representatives were given by Ariki and Park in \cite{apc} when $k=0$.
Nevertheless, we can show in this paper that the defect governs the representation type in affine type $C$ for arbitrary $ \La = \La_k$.
As a first step, we enumerate a set of maximal weight representatives 
\begin{equation*}
\begin{aligned} 
\xik{k}{i}&:= \alpha_{k+1} + 2\alpha_{k+2} + \dots + (i-1)\alpha_{k+i-1} + i( \alpha_{k+i} + \alpha_{k+i+1} + \dots + \alpha_{\ell-1})+ \frac{i}{2}\alpha_\ell; \\
\xik{k}{-i}&:= \alpha_{k-1} + 2\alpha_{k-2} + \dots + (i-1)\alpha_{k-i+1} + i( \alpha_{k-i} + \alpha_{k-i-1} + \dots + \alpha_{1})+ \frac{i}{2}\alpha_0 
\end{aligned}
\end{equation*}
for $ k\pm i \in I $. Thus, we need only investigate $ R^{\La_k}(m\delta - \xik{k}{\pm i}) $ for $ m \ge i/2 $, a fact we will also use to show that the defect is non-negative in level one.

We explain the cases where we need different arguments than \cite{apc}.
First, we consider $i = 0$ and $m=1$, that is the representation type of $\fqH{k}(\delta)$.
For the case $\ell=2$ and $k=1$, we give an explicit description of the indecomposable projective modules and prove that the algebra has tame representation type.
For larger $\ell$ and $k$, we are able to apply a recent interesting result of Ariki (\cite{arikirep}, see also \cref{trick}) which reduces the problem to finding two appropriate idempotents.
We show that if $\ell\neq2$ and $0 \le k \le \ell$, $\fqH{k}(\delta)$ is of wild representation type.
Then, we will show that $R^{\La_k}(\delta-\xik{k}{\pm 2})$ is of finite representation type, and hence that block algebras of defect one are Brauer tree algebras whose Brauer tree is a straight line with no exceptional vertex. We will also see that depending on $k$, there can be two inequivalent Morita equivalence classes of blocks of defect one, with distinct number of simple modules $k+1$ or $\ell-k+1$ respectively.
Next, we deal with the representation type of $\fqH{k}(2\delta-\xik{k}{\pm4})$ and using \cref{trick} again, we arrive at the result that this algebra has wild representation type as well.
Finally, we handle the remaining cases by using the same arguments as in \cite{apc} with some slight modifications.
Our results are summarised in \cref{main}. \\

\textbf{Acknowledgements.} Both authors thank Professor Susumu Ariki and Professor Liron Speyer for their helpful comments and inspiring correspondence. The second author would also like to thank Professor Susumu Ariki for hosting her at Osaka University during which stay much of the research for this project was carried out.

\section{Background}

\subsection{Lie theory notation}\label{lie}

We mostly follow \cite{kac} and use standard notation for the root datum.

Let $\ell\in \{2,3,\dots\}$ and $\I= \{ 0,1,2, \ldots, \ell \} $.

The affine Cartan matrix of type $C^{(1)}_\ell$ is given by
\[
\cmA = (a_{ij})_{i,j\in I} = \begin{pmatrix}
2 & -1 & 0 & \cdots & 0 & 0 & 0\\
-2 & 2 & -1 & \cdots & 0 & 0 & 0\\
0 & -1 & 2 & \cdots & 0 & 0 & 0\\
\vdots & \vdots & \vdots & \ddots & \vdots & \vdots & \vdots\\
0 & 0 & 0 & \cdots & 2 & -1 & 0\\
0 & 0 & 0 & \cdots & -1 & 2 & -2\\
0 & 0 & 0 & \cdots &  0 & -1 & 2
\end{pmatrix}.
\]

We have \emph{simple roots} $\Pi=\{\alpha_i \mid i\in \I \}$ and \emph{fundamental weights} $\{\Lambda_i \mid i\in \I \}$ in the \emph{weight lattice} $\wlP$, and simple coroots $\Pi^\vee=\{\alpha_i^\vee \mid i\in \I\}$ in the \emph{dual weight lattice} $\wlP^\vee$. 
Each $\al_i$ gives rise to a $\bbz$-linear transformation $r_i$ acting on $\wlP$ by
$r_i\Lambda=\Lambda-\langle h_i, \Lambda\rangle\alpha_i$, for $\Lambda\in \wlP$. Let $d \in P^\vee$ be the element such that $ \alpha_i(d) = \delta_{0,i}$.
We denote by $\weyl$ the Weyl group, the group generated by $\{r_i \mid i\in \I \}$.

There is a $\weyl$-invariant symmetric bilinear form $( - , - )$ on $\wlP$ satisfying the following:
\begin{enumerate}
    \item $( \Lambda_i, \alpha_j ) = d_j \delta_{ij}$; and 
    \item $( \alpha_i, \alpha_j ) = d_i a_{ij}$ 
where $d = (2,1\dots,1,2)$ if $\ell<\infty$.\label{ds}
\end{enumerate}
We denote the set of \emph{dominant integral weights} by
\[
\wlP^+ = \{\Lambda \in \wlP \mid \langle\alpha_i^\vee,\Lambda\rangle \ge 0 \text{ for all } i\in \I\},
\]
where $\langle$ , $\rangle$ is the natural pairing.
We call $\rlQ:=\bigoplus_{i\in I} \bbz \alpha_i$ the \emph{root lattice} and $\rlQ^+ = \bigoplus_{i\in I} \bbz_{\ge 0} \alpha_i$ the \emph{positive cone} of the root lattice.
For $C^{(1)}_\ell$, we have the \emph{null root} given by
\[
\delta = \alpha_0 + 2\alpha_1 +\dots + 2\alpha_{\ell-1} +\alpha_\ell.
\]
The \emph{defect} of $\beta\in\rlQ^+$ (relative to $\La$) is given by
\[
\df_\La(\beta) = (\La,\beta)-\frac{1}{2}(\beta,\beta).
\]
When it is clear from context, we will omit $\La$ from the subscript and write $ \df(\beta) $ instead of $ \df_\La (\beta) $.

We set $\xik{k}{0} := 0$, and for $k + i \in \I$ or $k - i \in \I$ we set
\begin{equation} \label{xik}
\begin{aligned} 
\xik{k}{i}&:= \alpha_{k+1} + 2\alpha_{k+2} + \dots + (i-1)\alpha_{k+i-1} + i( \alpha_{k+i} + \alpha_{k+i+1} + \dots + \alpha_{\ell-1})+ \frac{i}{2}\alpha_\ell; \\ %\text{ and}
\xik{k}{-i}&:= \alpha_{k-1} + 2\alpha_{k-2} + \dots + (i-1)\alpha_{k-i+1} + i( \alpha_{k-i} + \alpha_{k-i-1} + \dots + \alpha_{1})+ \frac{i}{2}\alpha_0 
\end{aligned}
\end{equation}
respectively.

Note that if $i\ne0$ then
\begin{equation}\label{xikev}
\xik{k}{\pm i}(\al_j^\vee) = \left\{
                              \begin{array}{ll}
                                -1 & \hbox{ if } j=k, \\
                                1 & \hbox{ if } j=k\pm i, \\
                                0 & \hbox{ otherwise,}
                              \end{array}
                            \right. 
\end{equation}
and they form a basis for $\sum_{i\in \I \setminus \{k\}} \bbq \alpha_i$.

\begin{lem}\label{lem:defcalc} For $ m \ge i/2 $ and so $m\delta - \xik{k}{\pm i} \in \rlQ^+ $, we have that
   $ \df(m\delta - \xik{k}{\pm i}) = 2m - \frac{1}{2} i $ (relative to $\La_k$).
\end{lem}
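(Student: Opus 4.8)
The plan is to expand the defect directly from its definition $\df(\beta) = (\La_k,\beta) - \tfrac12(\beta,\beta)$ with $\beta = m\delta - \xik{k}{\pm i}$, and to reduce everything to the two scalar quantities $(\La_k,\delta)$ and $(\xik{k}{\pm i},\xik{k}{\pm i})$; throughout I write $\xi := \xik{k}{\pm i}$ for brevity.

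First I would dispose of every term involving $\delta$. Writing $\delta = \sum_i a_i\alpha_i$, the coefficients satisfy $a_i d_i = 2$ for all $i$ (since $(a_0,\dots,a_\ell) = (1,2,\dots,2,1)$ and $d = (2,1,\dots,1,2)$), so from $(\alpha_i,\alpha_j) = d_i a_{ij}$ we obtain $(\delta,\alpha_j) = \sum_i a_i d_i a_{ij} = 2\sum_i a_{ij}$, which vanishes because every column of $\cmA$ sums to zero. Hence $(\delta,\mu) = 0$ for all $\mu\in\rlQ$; in particular $(\delta,\delta) = 0$ and $(\delta,\xi) = 0$, so the quadratic term collapses to $(\beta,\beta) = (\xi,\xi)$. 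For the linear term, $(\La_k,\delta) = \sum_i a_i(\La_k,\alpha_i) = a_k d_k = 2$ using $(\La_i,\alpha_j) = d_j\delta_{ij}$ and $a_k d_k = 2$; and $(\La_k,\xi) = 0$ because $\xi$ has no $\alpha_k$-component, both expansions in \eqref{xik} starting at $\alpha_{k\pm 1}$. Thus $(\La_k,\beta) = 2m$, and it remains only to evaluate $(\xi,\xi)$.

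The crux is the computation $(\xi,\xi) = i$, and here I would exploit the coroot evaluations recorded in \eqref{xikev} rather than multiply out the expansion. Writing $\xi = \sum_j b_j\alpha_j$ and using $(\alpha_j,\xi) = d_j\langle\alpha_j^\vee,\xi\rangle = d_j\,\xi(\alpha_j^\vee)$, which follows from $(\alpha_j,\alpha_j) = 2d_j$, I get $(\xi,\xi) = \sum_j b_j d_j\,\xi(\alpha_j^\vee)$. By \eqref{xikev} the only nonzero values of $\xi(\alpha_j^\vee)$ occur at $j = k$ and $j = k\pm i$; the former drops out since $b_k = 0$, leaving $(\xi,\xi) = b_{k\pm i}\,d_{k\pm i}$.

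The one point requiring care, and the main (if modest) obstacle, is that this product $b_{k\pm i}\,d_{k\pm i}$ must be checked to equal $i$ across a case split at the ends of the diagram. For $\xik{k}{i}$ with $k+i<\ell$ the relevant node is interior, so $b_{k+i} = i$ and $d_{k+i} = 1$; whereas if $k+i = \ell$ the coefficient is instead $b_\ell = i/2$ while $d_\ell = 2$, and the product is again $i$. The analogous split for $\xik{k}{-i}$ distinguishes $k-i>0$ (where $b_{k-i} = i$, $d_{k-i} = 1$) from $k-i = 0$ (where $b_0 = i/2$, $d_0 = 2$). In every case $(\xi,\xi) = i$, whence $\df(m\delta - \xik{k}{\pm i}) = 2m - \tfrac12 i$, as claimed.
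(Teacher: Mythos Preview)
Your proof is correct and follows essentially the same approach as the paper's own proof: expand $\df(\beta)$ from the definition, then use $(\La_k,\xi)=0$, $(\delta,-)|_{\rlQ}=0$, $(\La_k,\delta)=2$, and $(\xi,\xi)=i$ to conclude. The paper simply records the three-line calculation and leaves all of those ingredients implicit, whereas you supply the verifications in full (including the boundary cases $k\pm i\in\{0,\ell\}$ in the computation of $(\xi,\xi)$).
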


\begin{proof}
We have the following calculation:
\begin{equation*}
    \begin{aligned}
    \df_{\La_k}(m\delta - \xik{k}{\pm i}) &= (\La_k, m\delta - \xik{k}{\pm i}) - \frac{1}{2} (m\delta - \xik{k}{\pm i},m\delta - \xik{k}{\pm i}) \\
    &= (\La_k, m\delta) - \frac{1}{2} (\xik{k}{\pm i},\xik{k}{\pm i}) \\
    &= 2m - \frac{1}{2} i
\end{aligned}
\end{equation*}
as required.
\end{proof}

Let $\fkg = \fkg(\cmA) $ be the affine Kac--Moody algebra associated with the Cartan datum $(\cmA, \wlP, \Pi, \Pi^{\vee})$ and let $U_q(\fkg)$ be its quantum group.
The quantum group $U_q(\fkg)$ is a $\bbc(q)$-algebra generated by $f_i$, $e_i$ $(i\in \I)$ and $q^h$ $(h\in \wlP^\vee)$ with certain relations (more details can be found in \cite[Chap.\ 3]{HK02}).
Let $\bba=\bbz[q,q^{-1}]$ and denote by $U_\bba^-(\fkg)$ the subalgebra of $U_q(\fkg)$ generated by $f_i^{(n)} := f_i^n / [n]_i!$ for $i\in \I$ and $n\in \bbz_{\ge0}$,
where  $q_i = q^{d_i}$ and
\begin{equation*}
 \begin{aligned}
 \ &[n]_i =\frac{ q^n_{i} - q^{-n}_{i} }{ q_{i} - q^{-1}_{i} },
 \ &[n]_i! = \prod^{n}_{k=1} [k]_i.
 \end{aligned}
\end{equation*}

For some $\Lambda \in \wlP^+$, let $V(\Lambda)$ be the irreducible highest weight $U_q(\fkg)$-module with highest weight $\Lambda$
and $V_\bba(\Lambda)$ the $U_\bba^-(\fkg)$-submodule of $V(\Lambda)$ generated by the highest weight vector.

The Fock space representation for $U_q(C_\ell^{(1)})$ was first constructed in \cite{KMM93} by folding the Fock space representation for $U_q(A_{2\ell-1}^{(1)})$ via the Dynkin diagram automorphism (see \cref{res}).
Later, the combinatorics of the Fock space and its crystal base were described in terms of tableaux and Young diagrams (\cite{KimShin04, prem04}).
The next section focuses on explaining this combinatorial realisation for $V(\La_k)$.

\subsection{Partitions and tableaux}

\begin{defn}
For $n\ge 0$, a \emph{partition} of $n$ is a weakly decreasing sequence of non-negative integers $\la = (\la_1, \la_2, \dots)$ such that the sum 
$|\la|=\la_1+\la_2+\cdots$ is equal to $n$. If $\la$ is a partition of $n$ we write $\la\vdash n$.
We write $\varnothing$ for the unique partition of 0.
We will denote the set of partitions of $n$ by $\Par_n$.
\end{defn}

For any $\la\in \Par_n$, we define its \emph{Young diagram} $[\la]$ to be the set
\[
\{(r,c) \in \bbz_{\ge 1} \times \bbz_{\ge 1}  \mid c\le \la_r\}.
\]

Note that we will depict a Young diagram of a partition using the English convention (i.e.\ successive rows of the diagram are lower down the page).

We define $f_\ell : \bbz \rightarrow I$ by $k \mapsto |k|$ if $\ell = \infty$ and, if $\ell \ne \infty$, $f_\ell : \bbz / 2\ell\bbz \rightarrow I $ by
\begin{equation}\label{res}
\begin{aligned}
&f_\ell(0 + 2\ell\bbz) = 0, \quad f_\ell(\ell+2\ell\bbz) = \ell, \\
 &f_\ell(k + 2\ell\bbz) = f_\ell(2\ell-k + 2\ell\bbz) = k \quad  \text{ for } 1 \le k \le \ell-1.
\end{aligned}
\end{equation}
Let $p$ be the natural projection $\bbz \rightarrow \bbz/ 2\ell\bbz$ 
%if $\ell \ne \infty$ and $p = \textrm{id} $ if $\ell=\infty$
.
We set
$\pi_\ell=f_\ell \circ p: \bbz \rightarrow I$.
If there is no confusion, we will denote $\pi_\ell(k)$ by $\overline{k}$, for $k\in \bbz$.

Given a \emph{charge} $\kappa\in \bbz$, we define $\Lambda_\kappa\in\wlP^+$ by $\Lambda_\kappa:= \Lambda_{\overline{\kappa}}.$
If $\la$ is a partition of $n$, then to any node $A = (r,c) \in [\la]$ we can associate its \emph{residue} defined by
\[
\res A = \overline{\kappa+c-r}.
\]
If $\res A = i$, we call $A$ an $i$-node.

\begin{eg}
If $\la=(8,6,6,5,2)$, $\kappa=2$ and $\ell=4$, then $\la$ has the following residue pattern.
\[
\young(23432101,123432,012343,10123,21)
\]
\end{eg}

We say that a node $A\in[\la]$ is \emph{removable} (resp.~\emph{addable}) if $[\la]\setminus A$ (resp.~$[\la]\cup A$) is a valid Young diagram for a partition of $n-1$ (resp.~$n+1$).
We write $\la\nearrow A$ (resp.~$\la\swarrow A$) as shorthand for the partition whose Young diagram is $[\la]\setminus A$ (resp.~$[\la]\cup A$).
For an $i$-node $A\in[\la]$, we set
\begin{align*}
N_0(\la) &= \#\{\text{$0$-coloured boxes in $\la$}\}, \\
d_i(\la) &= \#\{\text{addable $i$-nodes of $[\la]$} \} - \#\{\text{removable $i$-nodes of $[\la]$}\}, \\
d_A(\la) &= d_i \cdot (\#\{\text{addable $i$-nodes of $[\la]$ below } A\} - \#\{\text{removable $i$-nodes of $[\la]$ below } A\}), \\
d^A(\la) &= d_i \cdot (\#\{\text{addable $i$-nodes of $[\la]$ above } A\} - \#\{\text{removable $i$-nodes of $[\la]$ above } A\})
\end{align*}
where $d_i$ is given in \cref{lie}. We define the \emph{Fock space $\mathcal{F}(\kappa)$ with charge $\kappa$} to be the $\bbq(q)$-vector space  with basis consisting of partitions of $n$. 
For a Young diagram $[\la]$, $\mathcal{F}(\kappa)$ has a $U_q(\mathfrak{g}(\cmA))$-module structure defined by 
\begin{align*}
% q^d is the degree operator
q^d \la = q^{-N_0(\la)} \la, \qquad&\qquad   e_i \la = \sum_{A} q^{d_A(\la)} \la\nearrow A, \\
q^{\al_i^\vee} \la = q^{d_i(\la)}\la, \qquad&\qquad  f_i \la = \sum_{A} q^{-d^A(\la)} \la\swarrow A,
\end{align*}
where $A$ runs over all removable $i$-nodes and all addable $i$-nodes respectively \cite[Theorem~2.3]{prem04}).

We identify the basis of $\mathcal{F}(\kappa)$ with the set of all Young diagrams.
Its crystal structure can be described by considering the usual {\it $i$-signature}.
For a Young diagram $[\lambda]$, we consider all addable or removable $i$-nodes $a_1, a_2, \ldots, a_m$ of $[\lambda]$ from top to bottom and to each $a_j$ of $[\lambda]$, we assign its signature $s_j$ as $+$ (resp.\ $-$) if it is addable (resp.\ removable).
We cancel out all possible $(-,+)$ pairs in the $i$-signature $(s_1, \ldots, s_m  )$ to obtain the \emph{reduced $i$-signature}, which is a sequence of $+$'s is followed by $-$'s. We call the removable node corresponding to the leftmost $-$ in the $i$-signature a \emph{good node} and the addable node corresponding to the rightmost $+$ in the
$i$-signature a \emph{cogood node}.
We define $\tilde{f}_i \lambda$ to be the Young diagram obtained from $[\lambda]$ by adding a box at the cogood node.
Similarly, we define $\tilde{e}_i \lambda$ to be a Young diagram obtained from $[\lambda]$ by removing the box at the good node. The operators $\tilde{e}_i$ and $\tilde{f}_i$ defined above coincide with Kashiwara's operators cf. \cite[Theorem~3.3]{prem04}.
Here our choice of convention is compatible with that of \cite[\S3.6]{BK09} which deals with the type A Fock space, and differs from \cite{KimShin04, prem04} in the choice of convention used when reducing the $i$-signature. %by flipping the Young diagrams diagonally. 

Then (see \cite[Section~2]{prem04}), the $U_q(\mathfrak{g})$-submodule of $\mathcal{F}(\kappa)$ with $ \La_\kappa = \La_k $ generated by the empty partition $ \varnothing_k $ is isomorphic to the irreducible integrable highest weight module $V(\La_k)$, and the crystal graph for the $U_q(\mathfrak{g})$-crystal $V(\La_k)$ is the directed coloured graph with vertices the set of partitions that can be obtained from repeated application of the operators $\tilde{f}_i$, $i \in I $ to $\varnothing_k$ and $i$-coloured edges $ \la \xrightarrow{i} \mu $ whenever $\mu = \tilde{f}_i \la $ (or equivalently, $\tilde{e}_i \mu = \la $). We will call a partition \emph{Kleshchev} if it is a vertex in the crystal graph of $V(\La_k)\subset\mathcal{F}(\kappa)$ cf. \cite[\S6F]{EM22}. By Theorem C of \cite{EM22}, partitions of $n$ that are Kleshchev form a complete set of labels for the set of simple $R^\La_n$-modules. 

%In \cite[Thm.5]{KimShin04}, a criterion for evaluating if a given partition is Kleshchev is given, but it is not as straightforward as in type $A$. \\

\begin{defn}
Let $\la \in \Par_n$. A \emph{$\la$-tableau} is a bijection $\ttt :[\la] \rightarrow \{1,\dots,n\}$.
We depict $\ttt$ by filling each node $(r,c)\in [\la]$ with $\ttt(r,c)$.
We call a tableau $\ttt$ \emph{standard} if the entries increase along rows and down the columns.
We denote the set of standard tableaux by $\ST\la$.
We will denote by $\ttt^\la$ the unique $\la$-tableau where nodes are filled by $1,2,\dots,n$ along the successive rows and call $\ttt^\la$ the \emph{initial tableau}.
\end{defn}

For each $\la$-tableau $\ttt$, we have the associated residue sequence
\begin{align*}
\bfi^\ttt &= (i_1,i_2,\dots,i_n)\\
&=(  \res { \ttt^{-1}(1) } ,\   \res { \ttt^{-1}(2)} , \dots,  \res { \ttt^{-1}(n)} ).
\end{align*}
We will write $\bfi^\la$ for $\bfi^{\ttt^\la}$.

\begin{eg}
If $\la=(4,3,3,2)$, $\kappa=1$ and $\ell=3$, then
\[
\ttt^\la=\young(1234,567,89<10>,<11><12>)
\]
and we have that $\bfi^\la=(123201210121)$.
\end{eg}

Let $\ttt$ be a $\la$-tableau and choose some $0\le m\le n$.
We denote by $\tabupto\ttt m$ the set of nodes of $[\la]$ whose entries are less than or equal to $m$.
If $\ttt\in \ST\la$, then $\tabupto\ttt m$ is a standard tableau for some partition, which we call $\shp\ttt m$.

For any $\la\in\Par_n$ and $\ttt\in\ST\la$ we define the \emph{degree} $\deg\ttt$ of $\ttt$ as follows.
If $n=0$ then $\ttt$ is the unique $\varnothing$-tableau and we set $\deg\ttt:=0$.
Otherwise, let $A=\ttt^{-1}(n)\in[\la]$ and suppose $A$ is an $i$-node. We set inductively
\begin{align*} %\label{deg}
\deg\ttt := \deg\tabupto\ttt{n-1} + d_A(\la).
\end{align*}

\begin{eg}
If $\la=(3,3,1,1)$, $\kappa=1$ and $\ell=2$, then $\la$ has the following residue pattern
\[
\young(121,012,1,2)
\]
and if $\ttt$ is the tableau
\[
\young(134,258,6,7)
\]
then
\[
\deg\ttt=0+0+0+(1+1)+1+0+0-2=1.
\]
\end{eg}

\begin{defn}
We define the \emph{content} of the partition $\la$ to be
\[
\cont(\la) = \sum_{A\in[\la]} \alpha_{\res A}\in \rlQ^+.
\]
The \emph{defect} %, or also called the \emph{weight}, 
of a partition $\la$ is
\[
\df(\la):= \df(\cont(\la)).
\]
\end{defn}

Recall from \cite[\S~12.6]{kac} that a weight $\mu$ of $V(\Lambda)$ is {\it maximal} if $\mu + \delta$ is not a weight of $V(\Lambda)$. Let $\max(\Lambda)$ be the set of all maximal weights of $V(\Lambda)$. The following is a generalisation of Ariki and Park's result for $ \La = \La_0 $ to $ \La = \La_k $:

\begin{prop} \label{prop:maximal}
For the weight system of the $\fkg(\cmA)$-module $V(\Lambda_k)$ in type $C_\ell^{(1)}$, we have
\begin{enumerate}
\item $ \max(\Lambda_k) \cap \wlP^+ = \{  \Lambda_k + \xik{k}{\pm i} - \frac{i}{2}\delta \mid k\pm i\in I,\ \text{$i$ is an even nonnegative integer} \},$ and
\item $\mu$ is a weight of $V(\Lambda_k)$ if and only if $\mu = w \eta - m \delta$ for some $w\in \weyl$, $\eta \in \max(\Lambda_k) \cap \wlP^+ $ and $m\in \bbz_{\ge 0}$.
\end{enumerate}
\end{prop}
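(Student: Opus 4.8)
The plan is to establish part (2) from general Kac--Moody theory, independently of the explicit list, and to put all of the type-specific work into part (1). For part (2), recall \cite[Ch.~12]{kac} that $\max(\Lambda_k)$ is $\weyl$-invariant, that each $\weyl$-orbit of weights of $V(\Lambda_k)$ meets $\wlP^+$ in a single point, and that for every weight $\mu$ there is a unique $m\in\bbz_{\ge0}$ with $\mu+m\delta\in\max(\Lambda_k)$. Writing $\mu+m\delta=w^{-1}\eta$ with $\eta\in\max(\Lambda_k)\cap\wlP^+$ and using $w\delta=\delta$ gives $\mu=w\eta-m\delta$; conversely every such $w\eta-m\delta$ is a weight, since $\max(\Lambda_k)$ is $\weyl$-stable and the weight system is closed under subtracting $\bbz_{\ge0}\delta$. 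Hence part (2) holds for whatever set $\max(\Lambda_k)\cap\wlP^+$ turns out to be, and it remains to prove part (1).

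For part (1) I would use two standard facts: a dominant weight $\mu$ with $\Lambda_k-\mu\in\rlQ^+$ is automatically a weight of $V(\Lambda_k)$ (the weight system is saturated with highest weight $\Lambda_k$, cf.\ \cite[Ch.~11]{kac}); and, since $\langle\alpha_j^\vee,\delta\rangle=0$ for all $j$, if $\mu=\Lambda_k-\beta$ is dominant then so is $\mu+\delta$, whence $\mu$ is maximal exactly when $\beta-\delta\notin\rlQ^+$. Granting these, checking that each $\eta:=\Lambda_k+\xik{k}{\pm i}-\frac{i}{2}\delta$ (for even $i\ge0$ with $k\pm i\in I$) lies in the claimed set is a direct computation: \eqref{xikev} gives $\langle\alpha_j^\vee,\eta\rangle=\delta_{j,k\pm i}\ge0$, so $\eta$ is dominant; expanding $\delta$ and \eqref{xik} gives $\Lambda_k-\eta=\frac{i}{2}\delta-\xik{k}{\pm i}\in\rlQ^+$ (evenness of $i$ making all coefficients integral), so $\eta$ is a weight; and in $\Lambda_k-(\eta+\delta)=(\frac{i}{2}-1)\delta-\xik{k}{\pm i}$ the coefficient of $\alpha_\ell$ (resp.\ $\alpha_0$) is $-1$, so $\eta+\delta$ is not a weight and $\eta$ is maximal.

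The main obstacle is the reverse inclusion, that there are no further dominant maximal weights, and here I would use that we are in level $1$. A short check against $\cmA$ shows every comark of $C^{(1)}_\ell$ equals $1$, so each dominant weight $\mu$ of $V(\Lambda_k)$ has level $\sum_{j\in I}\langle\alpha_j^\vee,\mu\rangle=1$; as the labels are non-negative integers, exactly one equals $1$, at some node $p$, forcing $\mu=\Lambda_p+c\delta$ with $c\in\bbz$ (weights with equal coroot pairings differ by a multiple of $\delta$, the common kernel of the $\alpha_j^\vee$ being $\bbz\delta$). For fixed $p$ the weights $\Lambda_p+c\delta$ lying in $\Lambda_k-\rlQ^+$ form, if non-empty, a set of the shape $c\le c_{\max}$, so there is at most one maximal dominant weight with label at $p$, and the weights $\eta$ above realise $p=k\pm i$.

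It then remains to determine which $p$ occur, i.e.\ for which $p$ we have $\Lambda_k-\Lambda_p\in\rlQ$. Passing to the finite root system of type $C_\ell$, where $\rlQ=\bar{\rlQ}\oplus\bbz\delta$ and $\Lambda_k-\Lambda_p\equiv\bar\Lambda_k-\bar\Lambda_p\pmod{\bbq\delta}$, this reduces to $\bar\Lambda_k-\bar\Lambda_p\in\bar{\rlQ}$; and since $\bar\wlP/\bar{\rlQ}\cong\bbz/2\bbz$ for $C_\ell$ with $\bar\Lambda_j\in\bar{\rlQ}$ precisely when $j$ is even, this happens exactly when $p\equiv k\pmod2$, i.e.\ $p=k\pm i$ with $i$ even. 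Matching each such $p$ with the unique maximal weight $\eta$ exhibited above yields the claimed equality. I expect the fiddliest points to be this finite-type lattice bookkeeping together with the careful treatment of the boundary nodes $p\in\{0,\ell\}$ and of the degenerate case $i=0$.
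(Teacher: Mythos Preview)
Your argument is correct, and part~(2) matches the paper's use of \cite[Prop.~10.1, Cor.~10.1, (12.6.1)]{kac} essentially verbatim. For part~(1), however, your route genuinely differs from the paper's. To show that each $\eta=\Lambda_k+\xik{k}{\pm i}-\tfrac{i}{2}\delta$ is a weight, the paper exhibits explicit partitions in the Fock space $\mathcal{F}(\kappa)$ (namely $\lambda(+i)=(i^{k+i/2})$ and $\lambda(-i)=((\ell-k+i/2)^{i})$) whose weight is exactly $\eta$, and appeals to the graded dimension formula; you instead invoke the general fact that any dominant $\mu$ with $\Lambda_k-\mu\in\rlQ^+$ is automatically a weight. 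For the reverse inclusion, the paper writes a dominant maximal $\mu$ in the basis $\{\xik{k}{\pm i}\}$ of $\sum_{j\ne k}\bbq\alpha_j$ and reads off evenness of $i$ directly from the $\alpha_0$- or $\alpha_\ell$-coefficient in \eqref{xik}; you obtain the same parity constraint via the level-one observation $\mu=\Lambda_p+c\delta$ together with the finite-type computation $\bar\wlP/\bar\rlQ\cong\bbz/2\bbz$ for $C_\ell$. Your approach is more structural and avoids the Fock space altogether; the paper's is more self-contained within the combinatorial framework already set up and sidesteps the need to pin down a reference for the saturation-type statement (which is true in affine type but not stated in quite that form in \cite[Ch.~11]{kac}---you may prefer to point to \cite[\S12.6]{kac} or give a short direct argument).
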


\begin{proof}
The details here are similar to \cite[Proof of Prop.~5.1]{apc}: Let $\mu \in \max(\Lambda_0) \cap \wlP^+$. 
Since $ \{ \xik{k}{\pm i} \mid k\pm i \in I \} $ satisfies \cref{xikev} and $\{ \xik{k}{\pm i} \mid i\neq 0 \text{ and } k\pm i \in I \}$ forms a basis of $\sum_{i\in I\setminus \{k\}} \bbq \alpha_i$, by the same computations $\mu = \La_k + \xik{k}{\pm i} + t \delta$ for some $i\neq 0$ such that $ k\pm i \in I $ or $\mu = \La_k+t\delta$. In the latter case, $\mu$ maximal implies that $\mu = \La_k = \La_k+\xik{k}{0}$. 
In the former case, $\La_k-\mu \in\rlQ^+$ and \eqref{xik} together imply that $i$ is even.

Then, to show that $t = -\frac{i}{2}$, we need to consider the cases $\mu = \La_k+\xik{k}{-i}+t\delta$ and $\mu = \La_k + \xik{k}{i} + t \delta$ separately.
For the case $\mu = \La_k+\xik{k}{-i}+t\delta$, using the partition
$$ \lambda(-i) = (\underbrace{\ell-k+i/2,\ell-k+i/2,\ldots, \ell-k+i/2}_{i}) $$
in the Fock space $\mathcal{F}(\kappa)$ with $\La_\kappa=\La_k$ and considering its residue pattern we have the corresponding weight 
\begin{align*}
    %\wt(\lambda(-i)) &= 
    &\Lambda_k - \left( \frac{i}{2}\alpha_\ell + i (\alpha_{\ell-1} +\cdots+ \alpha_k) + (i-1)\alpha_{k-1} + (i-2)\alpha_{k-2} \cdots + \alpha_{k-i+1} \right) \\
        &= \La_k + \xik{k}{-i} - \frac{i}{2}\delta,
\end{align*}
and so by \cref{thm:dim} in the next section that $\dim R^{\La_k} (\frac{i}{2}\delta - \xik{k}{-i}) \neq 0 $ and so $\La_k + \xik{k}{-i} - \frac{i}{2}\delta$ is a weight of $V(\La_k)$. Furthermore, $\La_k + \xik{k}{-i} - \frac{i}{2}\delta$ is maximal since $ (-\xik{k}{-i} + \frac{i}{2}\delta) - \delta \notin \rlQ^+ $.

For the case $\mu = \La_k+\xik{k}{+i}+t\delta$, we use the partition
$$ \lambda(+i) = (\underbrace{i,i,\ldots, i}_{k+i/2}) $$
in the Fock space $\mathcal{F}(\kappa)$ with $\La_\kappa=\La_k$ instead, which has corresponding weight
\begin{align*}
    %\wt(\lambda(+i)) &= 
    &\Lambda_k - \left( \frac{i}{2}\alpha_0 + i (\alpha_{1} +\cdots+ \alpha_k) + (i-1)\alpha_{k+1} + (i-2)\alpha_{k+2} \cdots + \alpha_{k+i-1} \right) \\
        &= \La_k + \xik{k}{+i} - \frac{i}{2}\delta,
\end{align*}
and so $\La_k + \xik{k}{+i} - \frac{i}{2}\delta$ is a weight of $V(\La_k)$; it is maximal since $ (-\xik{k}{+i} + \frac{i}{2}\delta) - \delta \notin \rlQ^+ $.

The second part is also argued similarly: $ \max(\La_k) $ is $\weyl$-invariant by \cite[Prop.~10.1]{kac} and furthermore $ \max(\Lambda_k) = \weyl (\max(\Lambda_k) \cap \wlP^+) $ by \cite[Cor.~10.1]{kac}. Thus, given any weight $\mu$ of $V(\Lambda_k)$, by \cite[(12.6.1)]{kac} there exists a unique $ \zeta \in \max(\Lambda_k)$ and a unique $m \in \bbz_{\ge0}$ such that $\mu = \zeta - m\delta$.
\end{proof}

\subsection{Quiver Hecke algebras}
Let $\bbf$ be an algebraically closed field and $(\cmA, \wlP, \Pi, \Pi^{\vee})$ the Cartan datum from \cref{lie}. We set polynomials $\calq_{i,j}(u,v)\in\bbf[u,v]$, for $i,j\in I$,
of the form
\begin{align*}
\mathcal{Q}_{i,j}(u,v) = \left\{
                 \begin{array}{ll}
                   \sum_{p(\alpha_i|\alpha_i)+q (\alpha_j|\alpha_j) + 2(\alpha_i|\alpha_j)=0} t_{i,j;p,q} u^pv^q & \hbox{if } i \ne j,\\
                   0 & \hbox{if } i=j,
                 \end{array}
               \right.
\end{align*}
where $t_{i,j;p,q} \in \bbf$ are such that $t_{i,j;-a_{ij},0} \ne 0$ and $\mathcal{Q}_{i,j}(u,v) = \mathcal{Q}_{j,i}(v,u)$.
The symmetric group $\fkS_n = \langle s_k \mid k=1, \ldots, n-1 \rangle$ acts on $I^n$ by place permutations.

\begin{defn} 
The {\it cyclotomic quiver Hecke algebra} $R^\La_n$ associated with polynomials $(\calq_{i,j}(u,v))_{i,j\in I}$ and $\La_k\in \wlP$
is the $\bbz$-graded $\bbf$-algebra generated by three sets of generators
$$\{e(\nu) \mid \nu = (\nu_1,\ldots, \nu_n) \in I^n\}, \;\{x_r \mid 1 \le r \le n\}, \;\{\psi_j \mid 1 \le j \le n-1\} $$
subject to the following list of relations:
{\allowdisplaybreaks
\begin{align*}
e(\nu)e(\nu')&=\delta_{\nu,\nu'} e(\nu); \\
\sum_{\nu \in \I^n} e(\nu)&=1;\\
x_re(\nu)&=e(\nu)x_r;\\
\psi_r e(\nu) &= e(s_r \nu) \psi_r;\\
x_rx_s&=x_sx_r;\\
\psi_rx_s&=\mathrlap{x_s\psi_r}\hphantom{\smash{\begin{cases}
\frac{Q_{\nu_r,\nu_{r+1}}(x_r,x_{r+1}) - Q_{\nu_r,\nu_{r+1}}(x_{r+2},x_{r+1})}{x_r - x_{r+2}} e(\nu)\end{cases}}}\kern\nulldelimiterspace
\text{if } s\neq r,r+1;\\
\psi_r\psi_s&=\mathrlap{\psi_s\psi_r}\hphantom{\smash{\begin{cases}
\frac{Q_{\nu_r,\nu_{r+1}}(x_r,x_{r+1}) - Q_{\nu_r,\nu_{r+1}}(x_{r+2},x_{r+1})}{x_r - x_{r+2}} e(\nu)\end{cases}}}\kern\nulldelimiterspace\text{if } |r-s|>1;\\
x_r \psi_r e(\nu) &=(\psi_r x_{r+1} - \delta_{\nu_r,\nu_{r+1}})e(\nu);\\
x_{r+1} \psi_r e(\nu) &=(\psi_r x_r + \delta_{\nu_r,\nu_{r+1}})e(\nu);\\
\psi_r^2 e(\nu)&=Q_{\nu_r,\nu_{r+1}}(x_r,x_{r+1})e(\nu);\\
(\psi_{r+1}\psi_{r}\psi_{r+1} - \psi_{r}\psi_{r+1}\psi_{r})e(\nu)&=\begin{cases}
\frac{Q_{\nu_r,\nu_{r+1}}(x_r,x_{r+1}) - Q_{\nu_r,\nu_{r+1}}(x_{r+2},x_{r+1})}{x_r - x_{r+2}} e(\nu) & \text{if } \nu_r=\nu_{r+2},\\
0 & \text{otherwise};\end{cases}
\end{align*}
}for all admissible $r,s,\nu,\nu'$, and $x_1^{\langle \alpha^\vee_{\nu_1},\Lambda \rangle} e(\nu)=0$ for $\nu \in I^n$.
\end{defn}

The algebra $R^\La_n$ is $\bbz$-graded by setting
\[
\deg(e(\nu))=0, \qquad \deg(x_r e(\nu)) = (\alpha_{\nu_r},\alpha_{\nu_r}), \qquad \deg(\psi_s e(\nu)) = -( \alpha_{\nu_s}, \alpha_{\nu_{s+1}})
\]
for all admissible $r,s$ and $\nu$.

For some $\beta \in \rlQ^+$ with $\Ht(\beta) = n$, we set
\[
\I^\beta = \{\nu \in \I^n \mid \alpha_{\nu_1} + \dots + \alpha_{\nu_n} = \beta\}.
\]
Then $e(\beta):=\sum_{\nu\in I^\beta} e(\nu)$ is a central idempotent. We define $R^\Lambda(\beta):=R^\Lambda_n e(\beta)$, which is also an $\bbf$-algebra. 
It is clear that $R^\Lambda(\beta)$ may be defined by the same set of generators and relations if we replace $I^n$ with $I^\beta$ in both. 
The cyclotomic quiver Hecke algebra $R^\La_n$ can be decomposed into a direct sum of $\bbf$-algebras:
\[
R^\La_n=\bigoplus_{\substack{\beta\in \rlQ^+\\ \Ht(\beta) = n}} R^\Lambda(\beta),
\]
(see \cite[\S~2.1]{aps}) and we refer to $R^\Lambda(\beta)$ as a \emph{block algebra} of $R^\La_n$. In general, it is not known that $R^\Lambda(\beta)$ is indecomposable, but as we show later this is the case in level one when $\beta$ has defect zero and one (\cref{prop:defect0,cor:defect1} below). 
The block algebras for $\La = \La_0$ were referred to as \emph{finite quiver Hecke algebras} in \cite{apc}. If we drop the relation $x_1^{\langle \alpha^\vee_{\nu_1},\Lambda \rangle} e(\nu)=0$ for $\nu \in I^\beta$, we obtain the \emph{quiver Hecke algebra} $R(\beta)$.

We define the \emph{defect} of a nonzero block algebra $R^\Lambda(\beta)$ to be the defect of $\beta \in \rlQ^+$. 

\begin{lem} \label{lem:def}
    The defect of a nonzero block algebra $R^{\La_k}(\beta)$ is non-negative.
\end{lem}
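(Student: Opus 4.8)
The plan is to reduce everything to \cref{lem:defcalc} by observing that the defect of $\beta$ depends only on the squared length of the weight $\mu := \La_k - \beta$. Since $R^{\La_k}(\beta)\neq 0$, \cref{thm:dim} guarantees that $\mu$ is a weight of $V(\La_k)$. Expanding the definition of defect and substituting $\beta = \La_k - \mu$, bilinearity gives the identity
\[
\df_{\La_k}(\beta) = (\La_k,\La_k-\mu) - \tfrac12(\La_k-\mu,\La_k-\mu) = \tfrac12\big[(\La_k,\La_k)-(\mu,\mu)\big],
\]
so it suffices to bound $(\mu,\mu)$ above by $(\La_k,\La_k)$, and more precisely to pin down $(\mu,\mu)$ exactly.

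First I would apply \cref{prop:maximal}. Part~(2) lets me write $\mu = w\eta - m\delta$ for some $w\in\weyl$, $m\in\bbz_{\ge0}$ and $\eta\in\max(\La_k)\cap\wlP^+$, while part~(1) identifies $\eta = \La_k + \xik{k}{\pm i} - \tfrac{i}{2}\delta$ for an even integer $i\ge 0$ with $k\pm i\in\I$. The structural inputs I would then use are that $(-,-)$ is $\weyl$-invariant, and that $\delta$, being the null root, is fixed by $\weyl$ and orthogonal to every $\alpha_j$; hence $(\delta,\delta)=0$ and $(\delta,\gamma)=0$ for all $\gamma\in\rlQ$, whereas $(\delta,\La_k)=a_k d_k = 2$ in the normalisation $d=(2,1,\dots,1,2)$ (the same value already used implicitly in \cref{lem:defcalc}). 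Since $\eta-\La_k\in\rlQ$, these facts yield $(w\eta,\delta)=(\eta,\delta)=(\delta,\La_k)=2$, and therefore
\[
(\mu,\mu) = (w\eta,w\eta) - 2m(w\eta,\delta) + m^2(\delta,\delta) = (\eta,\eta) - 4m = (\eta - m\delta,\ \eta - m\delta).
\]
Thus both the Weyl twist and the null-root shift are absorbed, and $\mu$ has the same squared length as the dominant representative $\eta - m\delta = \La_k - \big[(\tfrac{i}{2}+m)\delta - \xik{k}{\pm i}\big]$.

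Consequently $\df_{\La_k}(\beta) = \df_{\La_k}\big((\tfrac{i}{2}+m)\delta - \xik{k}{\pm i}\big)$, and since $m' := \tfrac{i}{2}+m \ge i/2$ this is precisely the quantity computed in \cref{lem:defcalc}, giving
\[
\df_{\La_k}(\beta) = 2m' - \tfrac12 i = 2m + \tfrac{i}{2} \ge 0
\]
because $m\ge 0$ and $i\ge 0$. I expect the only genuine work to lie in the second paragraph: checking the orthogonality relations for $\delta$ and the value $(\delta,\La_k)=2$ in the type $C^{(1)}_\ell$ normalisation, together with the cancellation of $w$ via $\weyl$-invariance of the form and $\weyl$-fixity of $\delta$. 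Once these are verified, the conclusion is immediate from \cref{lem:defcalc}, uniformly in $k$ and in the sign of $i$, with no separate case analysis required.
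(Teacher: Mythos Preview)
Your proof is correct and follows essentially the same approach as the paper: both reduce to \cref{lem:defcalc} via Weyl-invariance of the defect. The paper phrases this invariance as the statement that $\beta$ and $\La_k-w^{-1}\La_k+w^{-1}\beta$ have the same defect ``by direct computation'', whereas you make the underlying mechanism explicit through the identity $\df(\beta)=\tfrac12[(\La_k,\La_k)-(\mu,\mu)]$ together with $\weyl$-invariance of the form and the isotropy of $\delta$; these are equivalent formulations, and your final bound $2m+\tfrac{i}{2}\ge 0$ is the paper's $2m'-\tfrac{i}{2}\ge \tfrac{i}{2}$ after the reparametrisation $m'=m+\tfrac{i}{2}$.
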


\begin{proof}
    By the categorification theorem, $R^\Lambda(\beta) \neq 0$ precisely when $ \La - \beta $ is a weight of $ V(\La) $. By \cref{prop:maximal}, when $ \La = \La_k $ this means that $\La - \beta = w \La + w \xik{k}{\pm i} - m \delta$ for $ k\pm i \in I $, $ m \ge i/2 $ and $w \in \weyl$ and so $ \beta = \La - w \La - w \xik{k}{\pm i} + m \delta$.
    By direct computation for $\beta \in \rlQ^+$, $\La - w^{-1} \La + w^{-1}\beta$ and $\beta$ have the same defect and so the defect of $R^{\La_k}(\beta)$ is $\df(\beta)=\df(m\delta - \xik{k}{\pm i}) = 2m - \frac{i}{2} \ge \frac{i}{2} \ge 0 $ by \cref{lem:defcalc}.
\end{proof}

This is true in higher levels as well, details will appear in an upcoming paper \cite{CMS23}.

In the rest of this section, we recall some important results which will be used in our proofs. Recall that by direct computation, $R^\Lambda(\beta)$ and $R^\Lambda(\Lambda-w\Lambda+w\beta)$ for $w \in \weyl$ have the same defect. In fact, a much stronger statement holds:
\begin{prop} [cf. {\cite[Cor.~4.8]{APA2}}] \label{reptype}
For $w \in \weyl$, $R^\Lambda(\beta)$ and $R^\Lambda(\Lambda-w\Lambda+w\beta)$ are derived equivalent; furthermore, they have the same number of simple modules and the same representation type.
\end{prop}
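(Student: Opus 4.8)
The plan is to reduce to the case of a single reflection $w = r_i$ and then to invoke the reflection functors of Chuang--Rouquier / Kang--Kashiwara $\mathfrak{sl}_2$-categorification machinery that underlies [APA2, Cor.~4.8]. First I would observe that the Weyl group $\weyl$ is generated by the simple reflections $r_i$ ($i \in \I$), and that the assignment $\beta \mapsto \La - w\La + w\beta$ respects composition in $w$: if $\beta' = \La - w_1\La + w_1\beta$, then $\La - w_2\La + w_2\beta' = \La - (w_2 w_1)\La + (w_2 w_1)\beta$, a short check using linearity of $w_2$. Since derived equivalence and the invariants ``number of simple modules'' and ``representation type'' are all transitive (derived equivalence is an equivalence relation, and the latter two are invariants of derived equivalence by a theorem in the tame/wild dichotomy literature), it suffices to establish the statement when $w = r_i$ is a single simple reflection.

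For a single simple reflection $r_i$, the target weight is $\La - r_i\La + r_i\beta$. Here I would unpack $r_i\beta = \beta - \langle \alpha_i^\vee, \beta\rangle \alpha_i$ and $r_i\La = \La - \langle \alpha_i^\vee, \La\rangle \alpha_i$, so that $\La - r_i\La + r_i\beta = \beta + (\langle \alpha_i^\vee,\La\rangle - \langle\alpha_i^\vee,\beta\rangle)\alpha_i$. The quantity $\langle\alpha_i^\vee, \La - \beta\rangle$ is exactly the $\mathfrak{sl}_2$-weight of the weight space $V(\La)_{\La-\beta}$ under the $i$-th $\mathfrak{sl}_2$, and the two blocks $R^\La(\beta)$ and $R^\La(\La - r_i\La + r_i\beta)$ correspond to the two weight spaces exchanged by $r_i$ within a single $i$-string. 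The induction and restriction functors $F_i = f_i$-induction and $E_i = e_i$-restriction, together with their divided powers, endow $\bigoplus_\beta R^\La(\beta)\mod$ with a categorical $\mathfrak{sl}_2$-action; the derived equivalence between blocks related by $r_i$ is then the reflection functor (Rickard complex) built from $F_i^{(a)}$ and $E_i^{(a)}$ in the sense of Chuang--Rouquier, which is precisely the content of [APA2, Cor.~4.8].

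Concretely, I would cite the categorical $\mathfrak{sl}_2$-structure on cyclotomic KLR algebras established by Kang--Kashiwara and Cautis--Lauda, under which the Chuang--Rouquier reflection functor gives a self-derived-equivalence swapping the relevant weight spaces; restricting this to the two blocks in question yields the required derived equivalence $D^b(R^\La(\beta)) \simeq D^b(R^\La(\La - r_i\La + r_i\beta))$. That derived equivalences preserve the number of simple modules follows from the invariance of the Grothendieck group rank (equivalently, the number of isomorphism classes of indecomposable projectives), and preservation of representation type for these symmetric (or at least self-injective) algebras follows since the tame/wild dichotomy is a derived invariant. The main obstacle I anticipate is verifying that the block-wise restriction of the global reflection functor is genuinely an equivalence rather than merely a functor between the correct blocks: one must check that the $i$-string through the weight $\La - \beta$ behaves as a finite-dimensional $\mathfrak{sl}_2$-representation of the expected highest weight, so that the Rickard complex is invertible on exactly these summands. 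This is where the defect computation (\cref{lem:defcalc}) and the identity $\df(\beta) = \df(\La - w\La + w\beta)$ noted before the statement do the real work, confirming that the two blocks sit symmetrically in the $i$-string and hence are genuinely interchanged by the reflection.
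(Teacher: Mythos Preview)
The paper does not supply its own proof of this proposition; it is stated as a citation of \cite[Cor.~4.8]{APA2} and used as a black box throughout. Your sketch is essentially the argument that underlies that cited result: reduce to a single simple reflection, then invoke the Chuang--Rouquier derived equivalence arising from the categorical $\mathfrak{sl}_2$-action on $\bigoplus_\beta R^\La(\beta)\mod$ established by Kang--Kashiwara. So at the level of strategy you have reconstructed the intended proof.

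Two remarks on execution. First, your final paragraph introduces an unnecessary worry. The Chuang--Rouquier theorem already guarantees that the Rickard complex is a derived autoequivalence permuting the weight categories exactly as $r_i$ permutes the weights; there is no separate block-level verification to perform, and the defect identity $\df(\beta) = \df(\La - w\La + w\beta)$ plays no role in establishing the equivalence (it is a consequence, not an input). Second, your appeal to ``the tame/wild dichotomy is a derived invariant'' should be sharpened: this is not known for arbitrary finite-dimensional algebras. The correct chain is that cyclotomic KLR algebras are symmetric (hence self-injective), so Rickard's theorem upgrades the derived equivalence to a stable equivalence of Morita type, and then one applies Krause's theorem that such equivalences preserve representation type. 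You gesture at self-injectivity parenthetically, but it is the essential hypothesis here, not an aside.
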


We denote the direct sum of the split Grothendieck groups of the categories $R^\Lambda (\beta)\text{-}\proj$ of finitely generated projective graded $R^\Lambda(\beta)$-modules by
\[
K_0(R^\Lambda) = \bigoplus_{\beta\in \rlQ^+} K_0(R^\Lambda (\beta)\text{-}\proj). 
\]
Note that $K_0(R^\Lambda)$ has a free $\bba$-module structure
induced from the $\bbz$-grading on $R^\Lambda(\beta)$, i.e.\ $(qM)_k =
M_{k-1}$ for a graded module $M = \bigoplus_{k \in \bbz} M_k $. Let
$e(\nu, i)$ be the idempotent corresponding to the concatenation of
$\nu$ and $(i)$, and set $e(\beta, i) = \sum_{\nu \in I^\beta}
e(\nu, i)$ for $\beta \in \rlQ^+$. Then we define the induction functor $F_i: R^\Lambda(\beta)\mod \rightarrow R^\Lambda(\beta+\alpha_i)\mod$ and
the restriction functor $E_i: R^\Lambda(\beta+\alpha_i)\mod \rightarrow R^\Lambda(\beta)\mod$ by
$$ F_i(M) = R^\Lambda(\beta+ \alpha_i) e(\beta,i) \otimes_{R^\Lambda(\beta)}M, \qquad \ E_i(N)  =  e(\beta,i)N ,$$
for an $R^\Lambda(\beta)$-module $M$ and an $R^\Lambda(\beta+ \alpha_i)$-module $N$.

\begin{thm} [{\cite[Thm.\ 5.2]{kk12}}] \label{endo}
Let $l_i = \langle h_i, \Lambda - \beta  \rangle$, for $i\in I$. Then one of the following  isomorphisms of endofunctors
on $R^\Lambda(\beta)\mod$ holds.
\begin{enumerate}
\item If $l_i \ge 0$, then
$$ q_i^{-2}F_i E_i \oplus \bigoplus_{k=0}^{l_i- 1} q_i^{2k} \mathrm{id} \buildrel \sim \over \longrightarrow E_iF_i .$$
\item If $l_i \le 0$, then
$$ q_i^{-2}F_i E_i  \buildrel \sim \over \longrightarrow  E_iF_i \oplus \bigoplus_{k=0}^{-l_i- 1} q_i^{-2k-2}  \mathrm{id} .$$
\end{enumerate}
\end{thm}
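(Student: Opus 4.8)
The plan is to prove both isomorphisms at the level of $(R^\Lambda(\beta),R^\Lambda(\beta))$-bimodules, since each of $E_iF_i$ and $F_iE_i$ is just tensoring with an explicit bimodule. Write $n=\Ht(\beta)$ and abbreviate $R=R^\Lambda(\beta)$, $R^+=R^\Lambda(\beta+\alpha_i)$ and $R^-=R^\Lambda(\beta-\alpha_i)$. Unwinding the definitions of the induction and restriction functors, as an endofunctor of $R\mod$ we have $E_iF_i\cong e(\beta,i)\,R^+\,e(\beta,i)\otimes_R(-)$, while $F_iE_i\cong\bigl(R\,e(\beta-\alpha_i,i)\otimes_{R^-}e(\beta-\alpha_i,i)\,R\bigr)\otimes_R(-)$. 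So the theorem reduces to producing, when $l_i\ge0$, a split short exact sequence of $(R,R)$-bimodules
$$ 0\longrightarrow q_i^{-2}\bigl(R\,e(\beta-\alpha_i,i)\otimes_{R^-}e(\beta-\alpha_i,i)\,R\bigr)\longrightarrow e(\beta,i)\,R^+\,e(\beta,i)\longrightarrow\bigoplus_{k=0}^{l_i-1}q_i^{2k}\,R\longrightarrow0, $$
and dually when $l_i\le0$; a split sequence of bimodules yields the asserted direct-sum decomposition of the functors upon tensoring.

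Next I would construct the maps. The inclusion of $q_i^{-2}F_iE_i$ into $E_iF_i$ is induced by the KLR crossing $\psi_n$ of the last two strands, which threads the newly added $i$-strand past the $i$-strand produced by $E_i$; since $\deg(\psi_n e(\nu))=-(\alpha_i,\alpha_i)=-2d_i$ this carries exactly the grading shift $q_i^{-2}$. The $l_i$ copies of the identity map into $e(\beta,i)\,R^+\,e(\beta,i)$ via the elements $x_{n+1}^{\,k}e(\beta,i)$ for $0\le k\le l_i-1$, that is, by placing $k$ dots on the last strand; as $\deg(x_{n+1}e(\nu))=(\alpha_i,\alpha_i)=2d_i$, the $k$-th summand acquires the shift $q_i^{2k}$, matching the statement. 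Thus the whole discrepancy between $E_iF_i$ and $F_iE_i$ is governed by the tower of dots $x_{n+1}^{\,k}$ on the last strand modulo the image of $\psi_n$.

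The crucial point is that precisely $l_i=\langle h_i,\Lambda-\beta\rangle$ powers of $x_{n+1}$ survive. This is where the cyclotomic relation $x_1^{\langle\alpha^\vee_{\nu_1},\Lambda\rangle}e(\nu)=0$ enters: commuting $x_{n+1}$ down to the first strand through the $\psi$'s feeds it into the cyclotomic truncation, which cuts the dot-tower off exactly at height $l_i$, leaving the stated $l_i$ identity summands when $l_i\ge0$ (and, symmetrically, forcing $F_iE_i$ to dominate when $l_i\le0$). The hard part will be proving that the resulting sequence is not merely a complex but genuinely short exact and split. This rests on the structural facts that $F_i$ and $E_i$ are exact, equivalently that $R^+e(\beta,i)$ and $e(\beta,i)R^+$ are projective as one-sided $R$-modules, together with the delicate injectivity and cokernel computation for the bimodule map. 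The route I would take is induction on $\Ht(\beta)$, using the faithfully flat ``last-strand'' filtration of $R^+$ over $R$ and a comparison of graded dimensions against the Shapovalov form to verify that the sizes match. I would isolate, as the technical heart, the lemma that the kernel of multiplication by $x_{n+1}^{\,l_i}$ on $e(\beta,i)\,R^+\,e(\beta,i)$ coincides with the image of the $\psi_n$-map, since controlling how the global cyclotomic relation truncates the local dot-tower on the last strand is exactly the subtle step.
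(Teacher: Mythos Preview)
The paper does not prove this theorem at all: it is quoted verbatim as a background result from Kang--Kashiwara \cite[Thm.~5.2]{kk12}, with no argument given. So there is nothing to compare your proposal against within the present paper.

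That said, your outline is a fair sketch of the strategy in the original Kang--Kashiwara proof: one identifies $E_iF_i$ and $F_iE_i$ with tensoring by explicit $(R,R)$-bimodules, builds the comparison map from the last crossing $\psi_n$ and the dot-tower $x_{n+1}^k$, and then shows that the cyclotomic relation truncates the tower at height $l_i$. Where your proposal falls short of a proof is precisely where you flag it yourself: the exactness and splitting of the bimodule sequence is the entire content of the theorem, and phrases like ``the route I would take is induction on $\Ht(\beta)$'' and ``I would isolate, as the technical heart, the lemma\ldots'' are not arguments but promissory notes. In Kang--Kashiwara this step is genuinely delicate and occupies most of the work; it requires a careful analysis of the natural transformations between the functors (not merely a dimension count against the Shapovalov form), and the projectivity of $R^+e(\beta,i)$ over $R$ is itself one of the things being established along the way rather than an input you can freely invoke. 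So your proposal correctly identifies the architecture but does not supply the load-bearing lemma, and for the purposes of this paper none of that is needed since the result is simply cited.
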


For $\nu \in \I^n$, let
\[
K_q(\lambda, \nu) := \; \sum_{\mathclap{\substack{\ttt \in \ST{\la}\\ \res{\ttt} = \nu}}} \; q^{\deg(\ttt)}, \qquad K_q(\lambda) := \; \sum_{\mathclap{\ttt \in \ST{\la}}} \; q^{\deg(\ttt)}.
\]

\begin{thm}[{\cite[Thm.2.5]{aps}}] \label{thm:dim}
For $\nu,\nu'\in I^\beta$, we have
\begin{align*}
\dim_q e(\nu) \fqH{}(\beta) e(\nu') &= \; \sum_{\mathclap{\substack{\lambda\vdash n\\ \wt(\lambda) = \Lambda - \beta}}}  \; K_q(\lambda, \nu) K_q(\lambda, \nu') ,\\
\dim_q \fqH{}(\beta) &= \; \sum_{\mathclap{\substack{\la \vdash n\\ \wt(\lambda) = \Lambda - \beta}}} \; K_q(\lambda)^2, \\
\dim_q \fqH{}_n &= \; \sum_{\la \vdash n} \; K_q(\lambda)^2,
\end{align*}
where $\dim_q M:= \sum_{k\in \bbz} \dim_\bbf(M_k)q^k$ for a free graded $\bbf$-module $M = \bigoplus_{k \in \bbz} M_k$.
\end{thm}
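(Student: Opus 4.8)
The plan is to deduce the second and third identities from the first, and to prove the first by producing a homogeneous basis of $e(\nu)R^{\Lambda}(\beta)e(\nu')$ indexed by pairs of standard tableaux of a common shape.

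First I would dispatch the two reductions. Since $1=\sum_{\nu\in\I^\beta}e(\nu)$, the Peirce decomposition gives $R^{\Lambda}(\beta)=\bigoplus_{\nu,\nu'\in\I^\beta}e(\nu)R^{\Lambda}(\beta)e(\nu')$; summing the first identity over all $\nu,\nu'$ and factoring the resulting double sum yields
\[
\dim_q R^{\Lambda}(\beta)=\sum_{\substack{\lambda\vdash n\\ \wt(\lambda)=\Lambda-\beta}}\Bigl(\sum_{\nu}K_q(\lambda,\nu)\Bigr)\Bigl(\sum_{\nu'}K_q(\lambda,\nu')\Bigr)=\sum_{\substack{\lambda\vdash n\\ \wt(\lambda)=\Lambda-\beta}}K_q(\lambda)^2,
\]
where $\sum_{\nu}K_q(\lambda,\nu)=\sum_{\mathfrak{t}\in\ST{\lambda}}q^{\deg\mathfrak{t}}=K_q(\lambda)$. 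The third identity then follows from $R^{\Lambda}_n=\bigoplus_{\Ht(\beta)=n}R^{\Lambda}(\beta)$, as $\beta$ ranging over height-$n$ elements of $\rlQ^+$ makes the shapes $\lambda$ with $\wt(\lambda)=\Lambda-\beta$ range over all of $\Par_n$.

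For the first identity, the central step is to exhibit a graded cellular basis $\{\psi_{\mathfrak{s}\mathfrak{t}}\}$ of $R^{\Lambda}(\beta)$, indexed by pairs $(\mathfrak{s},\mathfrak{t})$ of standard tableaux of a common shape $\lambda$ with $\wt(\lambda)=\Lambda-\beta$. Following the Hu--Mathas construction one sets $\psi_{\mathfrak{s}\mathfrak{t}}=\psi_{d(\mathfrak{s})}^{\ast}\,y_{\lambda}\,\psi_{d(\mathfrak{t})}$, where $\psi_{d(\mathfrak{s})}$, $\psi_{d(\mathfrak{t})}$ are products of the $\psi_r$ read off from reduced words carrying $\ttt^{\lambda}$ to $\mathfrak{s}$ and to $\mathfrak{t}$, and $y_{\lambda}$ is a monomial in the $x_r$ attached to $\lambda$. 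Two bookkeeping facts then make the formula fall out. The relation $\psi_r e(\nu)=e(s_r\nu)\psi_r$ places $\psi_{\mathfrak{s}\mathfrak{t}}\in e(\res{\mathfrak{s}})R^{\Lambda}(\beta)e(\res{\mathfrak{t}})$, so the basis is compatible with the Peirce decomposition; and the degree rules $\deg(x_re(\nu))=(\alpha_{\nu_r},\alpha_{\nu_r})$, $\deg(\psi_se(\nu))=-(\alpha_{\nu_s},\alpha_{\nu_{s+1}})$ show $\psi_{\mathfrak{s}\mathfrak{t}}$ is homogeneous of degree $\deg\mathfrak{s}+\deg\mathfrak{t}$, in agreement with the inductive definition of $\deg$ via the integers $d_A$. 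Granting that the $\psi_{\mathfrak{s}\mathfrak{t}}$ form a basis, counting those with $\res{\mathfrak{s}}=\nu$ and $\res{\mathfrak{t}}=\nu'$ according to their degree produces exactly $\sum_{\lambda}K_q(\lambda,\nu)K_q(\lambda,\nu')$.

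The main obstacle is proving that the $\psi_{\mathfrak{s}\mathfrak{t}}$ really are a basis, since beyond type $A$ the cellular theory is delicate (in type $A$ this is the Hu--Mathas theorem). I would argue by squeezing two bounds. Straightening an arbitrary monomial in the generators into the spanning set, using the braid-like and commutation relations together with the cyclotomic relation $x_1^{\langle\alpha^\vee_{\nu_1},\Lambda\rangle}e(\nu)=0$, gives the upper bound $\dim_q e(\nu)R^{\Lambda}(\beta)e(\nu')\le\sum_{\lambda}K_q(\lambda,\nu)K_q(\lambda,\nu')$. A matching lower bound comes from the categorification theorem: using $e(\nu)R^{\Lambda}(\beta)e(\nu')\cong\operatorname{Hom}_{R^{\Lambda}(\beta)}(R^{\Lambda}(\beta)e(\nu),R^{\Lambda}(\beta)e(\nu'))$ and the identification $K_0(R^{\Lambda})\cong V_{\bba}(\Lambda_k)$ with its canonical bilinear form, the graded rank of this homomorphism space is a pairing of the classes of $R^{\Lambda}(\beta)e(\nu)$ and $R^{\Lambda}(\beta)e(\nu')$, which, on unwinding the Fock space action of the $f_i$ on $\mathcal{F}(\kappa)$ (where the partition basis is orthonormal for the relevant symmetric form), is identified with $\sum_{\lambda}K_q(\lambda,\nu)K_q(\lambda,\nu')$. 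The two bounds coincide, forcing both linear independence and the asserted dimensions; the delicate point throughout is the construction and basis property of the $\psi_{\mathfrak{s}\mathfrak{t}}$, with everything else reducing to formal manipulation of the grading and idempotent relations.
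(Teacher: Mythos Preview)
The paper does not contain a proof of this statement; it is quoted from \cite[Thm.~2.5]{aps} and used as a black box throughout. There is therefore no proof in the paper to compare your argument against.

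For what it is worth, your reductions of the second and third identities to the first are correct, and your overall strategy for the first --- exhibit a homogeneous basis $\{\psi_{\mathfrak{s}\mathfrak{t}}\}$ indexed by pairs of standard tableaux of common shape, lying in $e(\res{\mathfrak{s}})R^\Lambda(\beta)e(\res{\mathfrak{t}})$ with degree $\deg\mathfrak{s}+\deg\mathfrak{t}$ --- is exactly the shape of the argument in the cited reference, which constructs such a graded cellular basis for cyclotomic KLR algebras of type $C^{(1)}_\ell$. Where your sketch is thin is precisely where the real work lies: the straightening argument giving the spanning set is considerably more delicate in type $C$ than in type $A$ because the quadratic relations $\psi_r^2 e(\nu)=Q_{\nu_r,\nu_{r+1}}(x_r,x_{r+1})e(\nu)$ are not uniform in degree, and the Hu--Mathas construction requires genuine adaptation rather than transcription. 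Your proposed lower bound via categorification and the Fock space form is plausible in spirit but needs the additional fact that the Shapovalov form on $V(\Lambda_k)\subset\mathcal{F}(\kappa)$ agrees with the restriction of the form for which the partition basis is orthonormal; this is true but not automatic, and in \cite{aps} the argument instead proceeds by directly building Specht modules and verifying the graded cellular axioms, from which the dimension formula drops out.
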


The statement below is an immediate consequence of the dimension formula:

\begin{cor}[\protect{cf.\ \cite[Cor.\ 2.7]{apc}}] 
\begin{enumerate}
\item Let $\nu \in I^n$. Then, $e(\nu) \ne 0 $ in $\fqH{k}(n)$ if and only if $\nu$ may be obtained from a standard tableau $\ttt$ as $\nu = \res{\ttt}$.
\item For a natural number $n$, we have $ \dim \fqH{k}_n = n!. $
\end{enumerate}
\end{cor}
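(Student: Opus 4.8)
The plan is to read off both statements directly from the graded dimension formula of \cref{thm:dim}, using only that the polynomials $K_q(\la,\nu)$ have non-negative coefficients (so that no cancellation can occur) together with the classical fact that $\sum_{\la\vdash n}(f^\la)^2=n!$, where $f^\la=|\ST\la|$.

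For part (1), I fix $\nu\in I^n$ and set $\beta=\sum_{r=1}^n\alpha_{\nu_r}$, so that $\nu\in I^\beta$ and the idempotent $e(\nu)$ lies in the block $\fqH{k}(\beta)$. Since $e(\nu)=e(\nu)\cdot 1\cdot e(\nu)\in e(\nu)\fqH{k}(\beta)e(\nu)$, we have $e(\nu)\ne 0$ if and only if $e(\nu)\fqH{k}(\beta)e(\nu)\ne 0$, i.e.\ if and only if its graded dimension is nonzero. Specialising the first formula of \cref{thm:dim} to $\nu'=\nu$ gives
\[
\dim_q e(\nu)\fqH{k}(\beta)e(\nu)=\sum_{\substack{\la\vdash n\\ \wt(\la)=\La_k-\beta}}K_q(\la,\nu)^2.
\]
Each $K_q(\la,\nu)=\sum_{\ttt\in\ST\la,\ \res\ttt=\nu}q^{\deg\ttt}$ is a Laurent polynomial whose coefficients count standard tableaux and are therefore non-negative; hence $K_q(\la,\nu)^2$ has non-negative coefficients, and so does the sum over $\la$. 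Consequently the sum vanishes precisely when every $K_q(\la,\nu)$ vanishes, that is, precisely when no partition $\la$ admits a standard tableau $\ttt$ with $\res\ttt=\nu$. Finally, any such $\ttt$ automatically satisfies $\cont(\la)=\beta$, equivalently $\wt(\la)=\La_k-\beta$, so the weight restriction in the sum is vacuous. This yields exactly the claim: $e(\nu)\ne 0$ if and only if $\nu=\res\ttt$ for some standard tableau $\ttt$.

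For part (2), I evaluate the third formula of \cref{thm:dim} at $q=1$. Since $K_1(\la)=\sum_{\ttt\in\ST\la}1=f^\la$, we obtain $\dim\fqH{k}_n=\sum_{\la\vdash n}(f^\la)^2$, and the Robinson--Schensted correspondence (equivalently, the sum-of-squares identity for the dimensions of the irreducible $\fkS_n$-modules) gives $\sum_{\la\vdash n}(f^\la)^2=n!$. There is no genuine obstacle here: the only point requiring care is the non-negativity of the coefficients of $K_q(\la,\nu)$ in part (1), which is what licenses the passage from \emph{graded dimension nonzero} to \emph{existence of a standard tableau with the prescribed residue sequence} --- without it, cancellation in $q$ could in principle conceal a nonzero idempotent.
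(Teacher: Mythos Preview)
Your proposal is correct and matches the paper's approach: the paper gives no proof, merely remarking that the corollary is ``an immediate consequence of the dimension formula'' (\cref{thm:dim}), and your argument spells out precisely that deduction --- non-negativity of the coefficients of $K_q(\la,\nu)$ for part~(1), and specialisation at $q=1$ together with $\sum_{\la\vdash n}(f^\la)^2=n!$ for part~(2).
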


\begin{prop}\label{prop:defect0}
    Block algebras of defect $0$ are simple.
\end{prop}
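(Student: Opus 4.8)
The plan is to reduce, via the Weyl-group invariance of representation type in \cref{reptype}, to the trivial block $\fqH{k}(0)\cong\bbf$, and then to argue that any finite-dimensional algebra derived equivalent to a field must be a matrix algebra, hence simple.

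First I would pin down exactly which $\beta$ give defect-zero blocks, repeating the computation from the proof of \cref{lem:def}. If $\fqH{k}(\beta)\ne 0$ then by \cref{prop:maximal} we may write $\beta=\La_k-w\La_k-w\xik{k}{\pm i}+m\delta$ with $w\in\weyl$, $i$ an even nonnegative integer, $k\pm i\in\I$ and $m\ge i/2$, and then \cref{lem:defcalc} gives $\df(\beta)=2m-\tfrac12 i$. Since $m\ge i/2$, this reads $\df(\beta)=2m-\tfrac12 i\ge \tfrac12 i\ge 0$; imposing $\df(\beta)=0$ forces $\tfrac12 i\le 0$, hence $i=0$ and then $m=0$. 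As $\xik{k}{0}=0$, we conclude that $\df(\beta)=0$ if and only if $\beta=\La_k-w\La_k$ for some $w\in\weyl$.

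Next I would apply \cref{reptype} with this $w$ and with the proposition's root taken to be $0$: the blocks $\fqH{k}(0)$ and $\fqH{k}(\La_k-w\La_k)=\fqH{k}(\beta)$ are derived equivalent and share the same number of simple modules and the same representation type. Now $\fqH{k}(0)=\fqH{k}_0$ is generated by the single idempotent $e(\varnothing)$ with no further generators, so $\fqH{k}(0)\cong\bbf$ (equivalently, the dimension formula \cref{thm:dim} gives $\dim \fqH{k}_0=0!=1$). Thus $\fqH{k}(\beta)$ is derived equivalent to the field $\bbf$ and, in particular, has exactly one simple module.

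Finally I would conclude simplicity. The crux is to promote ``derived equivalent to $\bbf$'' to ``semisimple''. I would invoke Rickard's derived Morita theory: $\fqH{k}(\beta)\cong\End_{D^b(\bbf)}(T)^{\mathrm{op}}$ for some tilting complex $T$ over $\bbf$. Every object of $D^b(\bbf)$ is a direct sum of shifts $\bbf[n]$, and the self-orthogonality condition $\Hom(T,T[j])=0$ for $j\ne 0$ forces all the shifts occurring in $T$ to coincide (two distinct shifts $n_1>n_2$ would produce a nonzero map in homological degree $j=n_1-n_2$); hence $T\cong\bbf[n]^{\oplus d}$ and $\fqH{k}(\beta)\cong\mathrm{Mat}_d(\bbf)$, which is simple. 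Equivalently phrased, semisimplicity is a derived invariant, and a semisimple algebra with a single simple module is a matrix algebra. The main obstacle is exactly this last step: making precise that nothing but a matrix algebra can be derived equivalent to a field, and checking that the derived equivalence supplied by \cref{reptype} is of a form to which Rickard's theorem applies (forgetting the grading if necessary, which does not affect the conclusion, since $\mathrm{Mat}_d(\bbf)$ is a simple algebra regardless of any grading it may carry).
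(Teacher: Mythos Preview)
Your reduction to a derived equivalence with $\fqH{k}(0)\cong\bbf$ is exactly what the paper does. Where you diverge is in the final step: the paper invokes the fact (from \cite[Proposition~4.2]{APA2}) that cyclotomic KLR algebras are self-injective, then applies Rickard's theorem that a derived equivalence between self-injective algebras induces a stable equivalence; since the stable module category of $\bbf$ is trivial, the stable module category of $B$ is trivial too, and a short contradiction argument then shows the unique simple $B$-module is projective. Your route instead classifies tilting complexes over $\bbf$ directly and reads off $B\cong\mathrm{Mat}_d(\bbf)$. Your argument is correct and in some ways cleaner: it avoids importing the self-injectivity result from \cite{APA2} and works for any algebra derived equivalent to a field. (Your worry about whether \cref{reptype} supplies a Rickard-type equivalence is unnecessary: Rickard's Morita theorem says that \emph{any} triangle equivalence $D^b(A)\simeq D^b(B)$ between derived module categories yields a tilting complex realising $B$ as an endomorphism ring, so no special form is required.) The paper's stable-category argument, on the other hand, is the one that adapts more readily if the base case were a nontrivial self-injective algebra rather than a field.
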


\begin{proof}
    By \cref{lem:defcalc,prop:maximal,reptype}, a block algebra $B$ of defect $0$ is derived equivalent to $ \fqH{k}_0 $ which is simple. 
    %Since the base field $\bbf$ is algebraically closed, $B$ and $ \fqH{k}_0 $ are isomorphic to matrix algebras and hence are local and symmetric. Chris:~we do not need this as we are not using a tilting argument here. 
    By \cite[Proposition~4.2]{APA2}, these algebras are self-injective and so by Rickard's theorem \cite[Theorem~2.1]{Rick89}, the derived equivalence between them is in fact a stable equivalence.
    Since $ \fqH{k}_0 $ is a simple algebra, all of its modules are projective and its stable module category consists only of zero objects, hence the same is true for $B$.
    
    Suppose that the unique indecomposable projective $B$-module is not simple. Then, the identity automorphism of the simple $B$-module does not factor through any projective $B$-module, so that it is a nonzero homomorphism in the stable module category of $B$, which is a contradiction. Hence, the unique simple $B$-module is projective-injective and therefore B is a simple algebra as well. 
\end{proof}

The next lemma will play a crucial role in our proofs.

\begin{lem}[{\cite[Lem.1.3]{arikirep}}] \label{trick}
Suppose that $e=e_1+e_2$ with $e_1^2=e_1\ne 0$, $e_2^2=e_2\ne 0$, $e_1e_2=e_2e_1=0$ and
$$
\dim_q e_i\fqH{k}(\beta)e_j-\delta_{ij}-c_{ij}q^2\in q^3\bbz_{\ge0}[q], 
$$
for $i,j=1,2$. Then the quiver of $e\fqH{k}(\beta)e$ has two vertices $1$ and $2$, $c_{ii}$ loops on the vertex $i$, for $i=1,2$, and there are at least $c_{12}$ arrows and 
$c_{21}$ reverse arrows between $1$ and $2$.
\end{lem}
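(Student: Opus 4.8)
The plan is to read the Gabriel quiver of $B := e\fqH{k}(\beta)e$ directly off the grading, exploiting the fact that the hypothesis already forces $B$ to be non-negatively graded with semisimple degree-zero part. Writing $B_m$ for the degree-$m$ component, the assumption $\dim_q e_iBe_j - \delta_{ij} - c_{ij}q^2 \in q^3\bbz_{\ge0}[q]$ says precisely that $B_m = 0$ for $m<0$, that $B_0 = \bbf e_1\oplus \bbf e_2 \cong \bbf\times\bbf$, that $B_1 = 0$, and that $\dim_\bbf e_iB_2e_j = c_{ij}$. First I would record the consequences for the radical: since $B$ is finite-dimensional and non-negatively graded, the augmentation ideal $B_{>0}=\bigoplus_{m\ge1}B_m$ is a nilpotent two-sided ideal, and $B/B_{>0}\cong B_0$ is semisimple, so $\Rad B = B_{>0} = \bigoplus_{m\ge2}B_m$, the last equality because $B_1=0$.

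Next I would pin down the vertices. The elements $e_1,e_2$ are orthogonal idempotents summing to the identity of $B$, and each corner $e_iBe_i = \bbf e_i \oplus e_i(\Rad B)e_i$ is local, its unique maximal ideal being the nilpotent part $e_i(\Rad B)e_i$. Hence $\{e_1,e_2\}$ is a complete set of primitive orthogonal idempotents, $B$ is basic, and $B/\Rad B\cong\bbf\times\bbf$ has exactly two simple modules $S_1,S_2$. Therefore the quiver of $B$ has exactly the two vertices $1$ and $2$, as claimed.

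For the arrows I would use the standard identification of the number of arrows from $i$ to $j$ with $\dim_\bbf e_i(\Rad B/\Rad^2 B)e_j$ (for a suitable orientation convention). The key graded input is that $\Rad^2 B = (B_{\ge2})^2\subseteq B_{\ge4}$, so in degrees $2$ and $3$ the quotient $\Rad B/\Rad^2 B$ coincides with $B_2$ and $B_3$ respectively; in particular its degree-$2$ part is exactly $B_2$. This yields $\dim_\bbf e_i(\Rad B/\Rad^2 B)e_j \ge \dim_\bbf e_iB_2e_j = c_{ij}$, and taking $(i,j)=(1,2),(2,1)$ gives at least $c_{12}$ arrows and $c_{21}$ reverse arrows between the two vertices. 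For the loops at vertex $i$ (the case $i=j$) I would further invoke the parity of the KLR grading: taking $e_i=e(\nu_i)$, the dimension formula \cref{thm:dim} gives $\dim_q e_iBe_i=\sum_\lambda K_q(\lambda,\nu_i)^2$, and since standard tableaux with a common residue sequence have degrees of a common parity, each $K_q(\lambda,\nu_i)$ is concentrated in a single parity, so this graded dimension involves only even powers of $q$. Hence $e_iB_3e_i=0$, the degree-$3$ term contributes no loops, and the loops are detected in degree $2$, giving $c_{ii}$ of them.

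The main obstacle is controlling the contribution of degrees $\ge3$ (for the cross-arrows) and $\ge4$ (for the loops) to $\Rad B/\Rad^2 B$: a priori $e_iB_3e_j$ with $i\ne j$, or the image of $e_iB_{\ge4}e_j$, could contribute further generators of $\Rad B/\Rad^2 B$ invisible in degree $2$. This is exactly why the statement asserts only the lower bounds $c_{12}, c_{21}$ for arrows between distinct vertices, whereas for loops the even-degree concentration of the diagonal corners $e_iBe_i$ removes the degree-$3$ ambiguity; upgrading the loop count from a lower bound to the exact value $c_{ii}$ is the delicate point, and is where one must control the degree-$\ge4$ part of $\Rad B/\Rad^2 B$ rather than merely its degree-$2$ part. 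In the intended applications this lower-bound information suffices, since exhibiting two loops at a vertex, or two arrows in each direction between the two vertices, already forces $e\fqH{k}(\beta)e$, and therefore $\fqH{k}(\beta)$, to be of wild representation type.
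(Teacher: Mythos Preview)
The paper does not give its own proof of this lemma: it is quoted from \cite[Lem.~1.3]{arikirep} and used as a black box throughout. There is therefore no argument in the present paper against which to compare your proposal.

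On the substance of your attempt: the identification of $\Rad B$ with $B_{\ge 2}$, the conclusion that $e_1,e_2$ are primitive (so the quiver has exactly two vertices), and the inequality $\dim_\bbf e_i(\Rad B/\Rad^2 B)e_j \ge \dim_\bbf e_iB_2e_j = c_{ij}$ coming from $\Rad^2 B \subseteq B_{\ge 4}$ are all correct and constitute the standard proof of the ``at least $c_{ij}$ arrows'' part. This is already enough for every application made in the paper, where only lower bounds on the number of arrows are used to deduce wildness.

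Your attempt to upgrade the loop count to \emph{exactly} $c_{ii}$, however, is not complete. First, the lemma as stated places no hypothesis that $e_i$ is a KLR idempotent $e(\nu_i)$, so appealing to \cref{thm:dim} goes beyond the stated assumptions. Second, the parity claim ``standard tableaux with a common residue sequence have degrees of a common parity'' is not justified (and is not obviously true in type $C^{(1)}_\ell$, where the symmetriser $d_i$ takes both values $1$ and $2$). Third, and most seriously, even if $e_iB_3e_i=0$, you have not excluded contributions from $e_iB_{\ge 4}e_i$ to $\Rad B/\Rad^2 B$; you acknowledge this yourself. So the argument as written establishes only at least $c_{ii}$ loops, not exactly $c_{ii}$.
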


\begin{lem}[{\cite[Lem.~3.1]{arikirep}}] \label{isom}
Let $\sigma: \I\simeq \I$ be a Dynkin automorphism, namely a bijective map that satisfies $a_{\sigma(i)\sigma(j)}=a_{ij}$, for $i,j\in I$. For 
$\beta=\sum_{i\in I} b_i\alpha_i\in \rlQ^+$ and $\Lambda=\sum_{i\in I} c_i\Lambda_i\in \wlP^+$, we define 
$$
\sigma\beta=\sum_{i\in I} b_i\alpha_{\sigma(i)}, \quad \sigma\Lambda=\sum_{i\in I} c_i\Lambda_{\sigma(i)}.
$$
Then, $R^\La(\beta)$ defined with $Q_{ij}(u,v)$ is isomorphic to $R^{\sigma\La}(\sigma\beta)$ defined with $Q'_{ij}(u,v)$ such that
$$
Q'_{\sigma(i)\sigma(j)}(u,v)=Q_{ij}(u,v).
$$
\end{lem}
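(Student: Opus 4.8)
The plan is to construct the isomorphism explicitly as the relabeling map induced by $\sigma$ on residue sequences. Since $\sigma$ is a bijection $\I\to\I$, it acts on $\I^n$ componentwise by $\sigma(\nu_1,\dots,\nu_n)=(\sigma(\nu_1),\dots,\sigma(\nu_n))$, and because $\sigma\beta=\sum_i b_i\alpha_{\sigma(i)}$ this restricts to a bijection $\I^\beta\to\I^{\sigma\beta}$. I would therefore define an $\bbf$-linear map $\Phi\colon R^\La(\beta)\to R^{\sigma\La}(\sigma\beta)$ on generators by $e(\nu)\mapsto e(\sigma\nu)$, $x_r e(\nu)\mapsto x_r e(\sigma\nu)$, and $\psi_s e(\nu)\mapsto \psi_s e(\sigma\nu)$, and then check that it is a well-defined graded algebra homomorphism.

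Before checking relations, the one structural fact I would establish is that $\sigma$ preserves the symmetric bilinear form on roots, i.e.\ $(\alpha_{\sigma(i)},\alpha_{\sigma(j)})=(\alpha_i,\alpha_j)$ for all $i,j$. Since $(\alpha_i,\alpha_j)=d_i a_{ij}$ and $a_{\sigma(i)\sigma(j)}=a_{ij}$ by hypothesis, this reduces to $d_{\sigma(i)}=d_i$, which holds because the symmetrizers are determined by the Cartan matrix (in type $C^{(1)}_\ell$ one checks it directly from $d=(2,1,\dots,1,2)$ as in \cref{lie}). This has two consequences: first, the degree assignment is preserved, since $\deg(x_r e(\nu))=(\alpha_{\nu_r},\alpha_{\nu_r})$ and $\deg(\psi_s e(\nu))=-(\alpha_{\nu_s},\alpha_{\nu_{s+1}})$ depend on $\nu$ only through the form; and second, the homogeneity constraint defining the admissible $Q'_{\sigma(i)\sigma(j)}$ coincides with that for $Q_{ij}$, so the prescribed choice $Q'_{\sigma(i)\sigma(j)}=Q_{ij}$ is indeed a legitimate parameter system.

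Next I would verify that $\Phi$ respects each defining relation. The orthogonal-idempotent, commuting-$x$, and the $x_r\psi_r$/$x_{r+1}\psi_r$ relations transfer immediately, using that $\sigma$ is injective so $\delta_{\nu_r,\nu_{r+1}}=\delta_{\sigma(\nu_r),\sigma(\nu_{r+1})}$; the intertwining relation $\psi_r e(\nu)=e(s_r\nu)\psi_r$ is compatible because $\sigma$ commutes with the place-permutation action of $\fkS_n$. The two relations that involve the $Q$-polynomials, namely $\psi_r^2 e(\nu)=Q_{\nu_r,\nu_{r+1}}(x_r,x_{r+1})e(\nu)$ and the deformed braid relation, transfer precisely because of the defining identity $Q'_{\sigma(i)\sigma(j)}=Q_{ij}$, together with $\nu_r=\nu_{r+2}\iff\sigma(\nu_r)=\sigma(\nu_{r+2})$. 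Finally the cyclotomic relation $x_1^{\langle\alpha^\vee_{\nu_1},\La\rangle}e(\nu)=0$ maps to the correct relation in $R^{\sigma\La}(\sigma\beta)$ because $\langle\alpha^\vee_{\nu_1},\La\rangle=c_{\nu_1}=\langle\alpha^\vee_{\sigma(\nu_1)},\sigma\La\rangle$, both being the coefficient of the relevant fundamental weight.

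To conclude, I would note that $\sigma^{-1}$ is again a Dynkin automorphism and that the hypothesis rewrites as $Q_{\sigma^{-1}(a)\sigma^{-1}(b)}=Q'_{ab}$, so the same construction applied to $\sigma^{-1}$ yields a homomorphism $R^{\sigma\La}(\sigma\beta)\to R^\La(\beta)$ that is a two-sided inverse to $\Phi$; hence $\Phi$ is an isomorphism, and it is automatically compatible with the block idempotents since $\Phi(e(\beta))=\sum_{\nu\in\I^\beta}e(\sigma\nu)=e(\sigma\beta)$. The only genuinely non-formal input is the preservation of the bilinear form, $d_{\sigma(i)}=d_i$; everything else is a mechanical but careful relation-by-relation check, which is the part most prone to indexing or sign errors.
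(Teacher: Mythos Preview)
The paper does not supply its own proof of this lemma; it is quoted verbatim from \cite[Lem.~3.1]{arikirep} and used as a black box. Your argument is correct and is the standard one: the relabeling map $e(\nu)\mapsto e(\sigma\nu)$, $x_r\mapsto x_r$, $\psi_s\mapsto\psi_s$ gives a well-defined algebra homomorphism precisely because of the defining identity $Q'_{\sigma(i)\sigma(j)}=Q_{ij}$ and the preservation of the symmetrizers $d_{\sigma(i)}=d_i$, and the inverse is constructed via $\sigma^{-1}$. There is nothing to add.
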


\begin{cor}\label{cor:blockisom}
    We have an isomorphism of algebras
\[
    \fqH{k}(\beta)_{Q_{ij}}\cong  \fqH{\ell-k}(\sigma\beta)_{Q'_{ij}}
   \cong \fqH{\ell-k}(\sigma\beta)_{Q_{ij}}
\]
\end{cor}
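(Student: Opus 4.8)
The plan is to realise both claimed isomorphisms through \cref{isom}, obtaining the first one directly and then reducing the second to the independence of the algebra on the choice of defining polynomials. First I would pin down the relevant Dynkin automorphism. Reading off the Cartan matrix $\cmA$, the obvious candidate is the reflection $\sigma\colon \I\to\I$, $\sigma(i)=\ell-i$, which interchanges the two branch nodes $0$ and $\ell$. I would verify $a_{\sigma(i)\sigma(j)}=a_{ij}$ by cases: on the interior $\{1,\dots,\ell-1\}$ the submatrix is of type $A$ and symmetric under this reflection, while at the two ends the only asymmetric pairs $(a_{01},a_{10})=(-1,-2)$ and $(a_{\ell,\ell-1},a_{\ell-1,\ell})=(-1,-2)$ are exchanged. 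Hence $\sigma$ is a Dynkin automorphism, and since $\sigma\Lambda_k=\Lambda_{\sigma(k)}=\Lambda_{\ell-k}$ while $\sigma$ carries $\beta$ to $\sigma\beta$, \cref{isom} applies verbatim and yields the first isomorphism $\fqH{k}(\beta)_{Q_{ij}}\cong\fqH{\ell-k}(\sigma\beta)_{Q'_{ij}}$, where $Q'$ is the family determined by $Q'_{\sigma(i)\sigma(j)}(u,v)=Q_{ij}(u,v)$.

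Next I would check that $Q'$ is itself an admissible family, so that $\fqH{\ell-k}(\sigma\beta)_{Q'_{ij}}$ is honestly a cyclotomic quiver Hecke algebra of type $C^{(1)}_\ell$. The key point is that $\sigma$ preserves the symmetrising data $d=(2,1,\dots,1,2)$, that is $d_{\sigma(i)}=d_i$, since it merely swaps $d_0=d_\ell=2$ and fixes the value $1$ on the interior. Together with $a_{\sigma(i)\sigma(j)}=a_{ij}$ this gives $(\alpha_{\sigma(i)},\alpha_{\sigma(j)})=(\alpha_i,\alpha_j)$, so that $Q'_{\sigma(i)\sigma(j)}=Q_{ij}$ carries the correct variable degrees and is homogeneous of degree $-2(\alpha_{\sigma(i)},\alpha_{\sigma(j)})$; its symmetry $Q'_{ij}(u,v)=Q'_{ji}(v,u)$ follows from that of $Q$, and its leading coefficient $t'_{\sigma(i),\sigma(j);-a_{ij},0}=t_{i,j;-a_{ij},0}\neq 0$ is inherited. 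Thus all the defining constraints on $Q'$ hold.

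Finally, the second isomorphism $\fqH{\ell-k}(\sigma\beta)_{Q'_{ij}}\cong\fqH{\ell-k}(\sigma\beta)_{Q_{ij}}$ is where the real content lies: I would invoke the fact that the isomorphism type of a cyclotomic quiver Hecke algebra is independent of the admissible choice of defining polynomials. Concretely one builds the isomorphism by rescaling the generators $x_r$ and $\psi_s$ to match leading coefficients and absorb the lower-order terms of $Q'$ into those of $Q$, which is possible precisely because the two families share the same homogeneity degrees and have nonzero leading terms, as established above. That the entire difficulty sits in this step is made transparent by noting that if the ambient family $Q$ is taken $\sigma$-invariant, then $Q'_{ij}=Q_{\sigma(i)\sigma(j)}=Q_{ij}$ and the map is simply the identity. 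I expect this independence statement to be the main obstacle, being the only nonformal ingredient; once it is in hand, composing the two isomorphisms proves the corollary.
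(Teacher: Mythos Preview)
Your proposal is correct and follows essentially the same route as the paper: the first isomorphism comes from \cref{isom} with $\sigma(i)=\ell-i$, and the second from the independence of the cyclotomic quiver Hecke algebra on the choice of admissible $Q_{ij}$, which the paper handles by citing \cite[Lem.~2.2]{APA1}. You have simply unpacked the verification that $\sigma$ is a Dynkin automorphism and that $Q'$ is admissible (using $d_{\sigma(i)}=d_i$) in more detail than the paper does.
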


\begin{proof}
    The first isomorphism follows by setting $\sigma(i)=\ell-i$ in \cref{isom}; the second isomorphism holds by the arguments given in the discussion in \cite[Lem.~2.2]{APA1}.
\end{proof}

Hence in order to determine the representation type of $\fqH{k}(\beta)$, it is enough to only consider $0\le k\le \ell/2$ by application of the above isomorphism.

\section{Representations of \texorpdfstring{$R^{\La_k}(\delta)$}{Rk(d)}}
 
In \cite{apc} in order to investigate the representation type of $\fqH{0}(\delta)$, they built on the work of \cite{km17}  and constructed the irreducible $\fqH{0}(\delta)$-modules.
However for $\fqH{k}(\delta)$, we only need to consider the projective modules and their radical series for $k=1$ and all other cases can be easily proved using \cref{trick}.
Note that $\fqH{k}(\delta)$ has defect $2$.
 
\begin{lem}\label{tame}
For $\ell=2$, the algebra $R^{\La_k}(\delta)$ has tame representation type.
\end{lem}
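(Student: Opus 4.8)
The plan is to first cut down the range of $k$ and then settle the single genuinely new case by an explicit computation of the projective modules.

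\emph{Reduction.} Since $\ell = 2$ forces $k\in\{0,1,2\}$, and \cref{cor:blockisom} (with the Dynkin automorphism $\sigma(i)=\ell-i=2-i$) gives $R^{\La_2}(\delta)\cong R^{\La_0}(\delta)$, it suffices to treat $k=0$ and $k=1$. The case $k=0$ is precisely the tame case of type $C_2^{(1)}$ established by Ariki--Park in \cite{apc}. For $k=1$ the automorphism $\sigma$ fixes the index $1$, so there is no self-duality shortcut to $k=0$ and a direct argument is required; this is the case I would carry out.

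\emph{Setup for $k=1$.} Here $\Ht(\delta)=4$, and at charge $\kappa=1$ every partition of $4$ has the residue multiset $\{0,1,1,2\}$, i.e.\ content $\delta$; hence, using \cref{thm:dim} together with the fact that $e(\nu)\ne 0$ only for $\nu=\res(\ttt)$ with $\ttt$ standard, the block $R^{\La_1}(\delta)$ is the \emph{only} nonzero block of $R^{\La_1}_4$ and so $\dim R^{\La_1}(\delta)=4!=24$. Running the crystal operators $\tilde f_i$ from $\varnothing_1$ as in the proof of \cref{prop:maximal} pins down the Kleshchev partitions of $4$; I expect these to be exactly $(3,1)$, $(2,1,1)$ and $(1^4)$, so that $R^{\La_1}(\delta)$ has three simple modules and its basic algebra $A$ has three vertices.

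\emph{Projectives and quiver.} Next I would determine the three indecomposable projectives $P_\la$ and their radical layers. The cleanest route is to extract the graded Cartan data from \cref{thm:dim} via the polynomials $K_q(\lambda,\nu)$ and the degree function on standard tableaux, fixing each $\dim_q P_\la$; the $\bbz$-grading (whose degree-$0$ part is semisimple) then lets me read off the radical filtration layer by layer, with the branching functors $E_i, F_i$ and the isomorphisms of \cref{endo} used to identify the composition factors appearing in $\Rad/\Rad^2$. Matching $\Top$ and $\Rad/\Rad^2$ across the three projectives determines the Gabriel quiver, and imposing the KLR relations yields a complete set of defining relations for $A$.

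\emph{Conclusion and main obstacle.} Finally I would identify $A$ with a known tame bounded-quiver algebra; I expect the quiver-with-relations description to present $A$ as a special biserial (in particular Brauer graph) algebra, which is tame by the Wald--Wasch\-b\"usch/Erdmann classification, giving the result. The principal difficulty is exactly this last identification: \cref{trick} only produces \emph{lower} bounds on the number of arrows and thus certifies wildness, not tameness, so ruling out wildness forces one to compute the \emph{exact} radical structure of every projective and verify the full list of relations (not merely the graded dimensions) before matching $A$ to a tame algebra. That verification of relations, rather than the dimension count, is where the real work lies.
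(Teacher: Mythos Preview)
Your reduction and overall plan coincide with the paper's: for $k=1$ one computes the basic algebra of $R^{\La_1}(\delta)$ explicitly, reads off its quiver and relations, and matches it against a known tame algebra. Your count of three simples via the Kleshchev partitions $(3,1),(2,1,1),(1^4)$ is correct (the paper reaches the same count by listing the four nonzero idempotents $e(1210),e(1201),e(1021),e(1012)$ and showing that two of the resulting projectives are isomorphic), and the graded dimension table from \cref{thm:dim} together with the non-negativity of the grading already pins down the radical layers, so the branching functors and \cref{endo} are not needed here.

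There is, however, a concrete gap in your final identification step. The basic algebra is \emph{not} special biserial: the middle projective has radical series
\[
\begin{array}{c}\cals_2\\ \cals_1\oplus\cals_2\oplus\cals_3\\ \cals_2\end{array},
\]
so the Gabriel quiver carries a loop $\gamma$ at vertex $2$ in addition to the arrows $1\rightleftarrows 2\rightleftarrows 3$; that is three arrows in and three out at vertex $2$, which violates the special biserial condition. Hence the Wald--Waschb\"usch/Brauer-graph classification cannot close the argument. The paper instead writes down the full relation ideal ($\alpha_1\alpha_2=\beta_2\beta_1=\alpha_1\gamma=\gamma\beta_1=\gamma\alpha_2=\beta_2\gamma=0$ and $\alpha_2\beta_2=\gamma^2=\beta_1\alpha_1$) and appeals to a tameness criterion tailored to this shape, namely \cite[Thm.~1(5)]{akmw20}. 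So your instinct that ``verification of relations is where the real work lies'' is exactly right, but the tame class you should be aiming for is not the special biserial one.
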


\begin{proof}
If $k=0$ this is proved in \cite[Thm.~3.7]{apc}. Suppose that $k\neq 0$. Then we may assume that $k=1$ by \cref{cor:blockisom} and we will prove the assertion by explicit construction of the indecomposable projective modules.
We have that $\delta=\al_0+2\al_1+\al_2$ and $e(\nu)\neq0\in \fqH{1}(\delta)$ for the following four sequences:
\begin{align*}
    e_1&=e(1210)\\
    e_2&=e(1201)\\
    e'_2&=e(1021)\\
    e_3&=e(1012).
\end{align*}
Using \cref{thm:dim}, we can easily calculate the $q$-dimensions of $e(\nu)\fqH{1}(\delta)\fqH{1}$:

\begin{center}
\begin{tabular}{ c|cccc } 
$e(\nu)\backslash e(\nu')$ & $1210$ & $1201$ & $1021$ & $1012$ \\  \hline
$1210$ & $1+q^4$ & $q^2$ & $q^2$ & $\cdot$ \\ 
$ 1201$ & $q^2$ & $ 1+q^2+q^4$ & $ 1+q^2+q^4$ & $q^2$ \\ 
$ 1021$ & $q^2$ &  $1+q^2+q^4$ & $1+q^2+q^4$ & $q^2$  \\ 
$ 1012$ & $\cdot$ & $q^2$ & $q^2$ & $1+q^4$ \\ 
\end{tabular}
\end{center}

Let $A=\fqH{1}(\delta)$.
Looking at the table above, we see that $A$ is non-negatively graded, hence its radical consists of linear combinations of elements of positive degree and $A/\Rad(A)$ is semisimple.
The basis of  $A/\Rad(A)$ contains the degree zero elements:
\[
A/\Rad(A)
=\spn\{e_1, e_2, e_2\psi_2e'_2, e'_2\psi_2e_2, e'_2, e_3\}.
\]

Let
\begin{align*}
    \calp_1&:=\fqH{1}(\delta)e_1\\
    \calp_2&:=\fqH{1}(\delta)e_2\\
    \calp_2'&:=\fqH{1}(\delta)e'_2\\
    \calp_3&:=\fqH{1}(\delta)e_3\\
\end{align*}
and define the simple modules $\cals_i:=\calp_i/ \Rad(\calp_i)$.
(Note that $\cals_1$ and $\cals_3$ are both one dimensional, while $\cals_2$ and $\cals_2'$ have dimension two.)

As for all $i=1,2,3,4$ we have that 
\[
\dim_q e_iAe_i=1+\text{higher order terms},
\]
the algebras $\End_A(\calp_i)$ are local, and we must have that $\calp_i$ are indecomposable projective $A$-modules with simple heads.

Now, we will show that $\calp_2\cong \calp_2'$ as left $A$-modules.
Let $f:\calp_2\rightarrow\calp_2'$ be given by $xe_2\mapsto x e_2\al$ where $\al=e_2\psi_2e_2'$. 
Because $f(e_2)\neq 0$ and it has degree $0$, we conclude that $f(e_2)\notin \Rad(\calp_2')$.
Thus, it generates $\cals_2'$ and $f$ is surjective.
Finally, as $\dim\calp_2=\dim\calp_2'$, we see that $f$ is indeed an isomorphism of $A$-modules.

Let $e=e_1+e_2+e_3$.
By looking at the $q$-dimensions of $e(\nu)Ae(\nu')$, we see that $\calp_i\not\cong\calp_j$ for $1\le i\neq j\le3$, hence $eAe$ is the basic algebra of $A$ and all simple $eAe$-modules are one dimensional.
We have the following radical series for $eAe$ (as the corresponding module categories of $eAe$ and $A$ are Morita equivalent, abusing notation, we write $\calp_i$ and $\cals_i$ instead of $e\calp_i$ and $e\cals_i$):
$$
\calp_1 \simeq
\begin{array}{c}
\cals_1 \\
\cals_2\\
 \cals_1
\end{array}
\qquad
\calp_2 \simeq
\begin{array}{c}
\cals_2 \\
\cals_1 \oplus \cals_2 \oplus \cals_3 \\
 \cals_2
\end{array}
\qquad
\calp_3 \simeq
\begin{array}{c}
\cals_3 \\
\cals_2\\
 \cals_3.
\end{array}
$$

Thus $eAe\cong\bbf Q/\cali$ where the ideal $\cali$ is given by the relations $\al_1\al_2=0=\beta_2\beta_1,\al_1\gamma=0=\gamma\beta_1,\gamma\al_2=0=\beta_2\gamma,\al_2\beta_2=\gamma^2=\beta_1\al_1$ and the quiver $Q$ has the following form:

\[
\begin{tikzcd}
1 \arrow[r, shift left, "\al_1"]
& 2 \arrow[l, shift left,"\beta_1"]
\arrow[r, shift left,"\al_2"]
\arrow[looseness=6, in=60, out=120, distance=0.8 cm,"\gamma"]
& 3 \arrow[l, shift left,"\beta_2"]
\end{tikzcd}
\]

Hence $\fqH{1}(\delta)$ is of tame representation type by \cite[Thm.~1(5)]{akmw20}.
\end{proof}

\begin{lem}\label{onedelta}
For $\ell\ge 3$ and $0\le k\le \ell$, the algebra $R^{\La_k}(\delta)$ has wild representation type.
\end{lem}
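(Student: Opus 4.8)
The plan is to apply \cref{trick}. It suffices to exhibit two orthogonal primitive idempotents $e_1,e_2$ in $A:=R^{\La_k}(\delta)$ such that, with $e=e_1+e_2$, the two-vertex quiver of $eAe$ produced by \cref{trick} is wild: idempotent truncation then forces $A$ itself to be of wild representation type, as wild representation type transfers from $eAe$ to $A$ via the functor $e(-)$ and its adjoint. By \cref{cor:blockisom} (and the remark following it) we may assume $0\le k\le \ell/2$, and the case $k=0$ is already covered by \cite{apc}, so the essential new range is $1\le k\le \ell/2$. Throughout, $\df(\delta)=2$ by \cref{lem:defcalc} with $m=1$, $i=0$.

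First I would single out two residue sequences $\nu^{(1)},\nu^{(2)}\in I^\delta$, each of the form $\res\ttt$ for a standard tableau $\ttt$ of a partition of weight $\La_k-\delta$, and set $e_i:=e(\nu^{(i)})$. Using \cref{thm:dim} I would compute the full $2\times 2$ table of graded dimensions $\dim_q e_iAe_j$ by enumerating the standard tableaux with the prescribed residue sequences and summing $q^{\deg\ttt}$. From this one reads off that each $e_i$ is primitive with local endomorphism ring (constant term $1$), that $e_1,e_2$ have non-isomorphic heads (the constant term of $\dim_q e_1Ae_2$ vanishes), and the integers $c_{ij}$ appearing as the coefficient of $q^2$; the remaining hypothesis $\dim_q e_iAe_j-\delta_{ij}-c_{ij}q^2\in q^3\bbz_{\ge0}[q]$, in particular the absence of $q^1$ terms, is checked from the same computation.

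\cref{trick} then describes the quiver of $eAe$: two vertices, exactly $c_{ii}$ loops at vertex $i$, and at least $c_{12}$ and $c_{21}$ arrows in the two directions. I would choose $\nu^{(1)},\nu^{(2)}$ so that this guaranteed subquiver is already wild — for instance so that $c_{11}\ge 2$ together with $c_{12},c_{21}\ge 1$, or a loop together with a multiple arrow. Passing to the radical-square-zero quotient $eAe/\Rad^2$, which has the same quiver, its representation type is governed by the associated separated quiver; for the configurations above the separated quiver is neither a Dynkin nor a Euclidean diagram (its Tits form is indefinite), so $eAe/\Rad^2$, hence $eAe$, hence $A$, is wild. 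Since the arrow counts in \cref{trick} are only lower bounds, having more arrows keeps the separated quiver wild, so the conclusion is insensitive to the precise arrow count.

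The main obstacle is the combinatorial input of the second step: producing, uniformly for all $\ell\ge3$ and $1\le k\le\ell/2$, a pair of residue sequences whose tableau-degree generating functions have exactly the right low-degree behaviour (primitivity, no $q^1$ terms, the targeted $q^2$-coefficients). Because $\max(\La_k)$ consists of several Weyl-group orbits in type $C$, the partitions of weight $\La_k-\delta$ — and thus the idempotents realising the loops and arrows — genuinely depend on $k$, so I expect to split into a few cases according to the position of $k$, to localise the tableau computation near the nodes $0$ and $\ell$ where $d_i=2$, and to verify the $2\times2$ dimension table via \cref{thm:dim} in each case.
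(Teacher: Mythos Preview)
Your proposal is correct and follows essentially the same approach as the paper: reduce to $1\le k\le\ell/2$ via \cref{cor:blockisom}, choose two explicit idempotents, compute the $2\times2$ table of graded dimensions via \cref{thm:dim}, and apply \cref{trick} together with a standard wildness criterion (the paper cites \cite[I.10.8(iv)]{er90}). The paper supplies exactly the combinatorial input you flag as the obstacle, splitting only into the cases $\ell=3$ and $\ell\ge4$ (uniformly in $k$), with outcome $\dim_q e_iAe_i=1+c_{ii}q^2+q^4$ (where $c_{ii}=2$ except $c_{11}=1$ when $\ell=3$) and $\dim_q e_iAe_j=q^2$ for $i\ne j$.
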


\begin{proof}
If $k=0$, the result follows by \cite[Thm.~3.7]{apc}.

Let $A=e\fqH{k}(\delta)e$ where $e=e_1+e_2$.
For $\ell=3$, we set:
\begin{align*}
    e_1&=e(123201)\\
    e_2&=e(101232).
\end{align*}

\begin{comment}
\cmt{I will just delete these tableaux. No need for them. Chris:~ok}
Then $e(123201)\neq 0$ for the following three tableaux: 
\begin{center}
\begin{tabular}{ m{2.5cm}m{2.5cm}m{2.5cm} } 
$\young(1234,5,6)$ & $\young(1234,56)$ &  $\young(12346,5)$ \\
$(4,1^2)$ & $(4,2)$ & $(5,1)$\\
$\deg 0$ & $\deg 1$ &  $\deg 2$  \\ 
\end{tabular}
\end{center}
and for $e(101232)\neq 0$ we have four different tableaux:
\begin{center}
\begin{tabular}{ m{2cm}m{2cm}m{2cm}m{2cm} } 
$\young(1,2,3,4,5,6)$ & $\young(16,2,3,4,5)$ &  $\young(145,2,3,6)$ & $\young(1456,2,3)$\\
$(1^6)$ & $(2,1^4)$ &  \new{$(3,1^3)$} & \new{$(4,1^2)$}\\
$ \deg 0$ & $\deg 1$ & $\deg 1$ & $\deg 2 $ \\ 
\end{tabular}
\end{center}
\end{comment}

Otherwise, we choose
\begin{align*}
    e_1&=e(k,k+1,\dots,\ell,\dots,k+1,k-1,k,k-2,k-2,\dots,0,\dots,k-1)\\
    e_2&=e(k,k+1,\dots,\ell,\dots,k+3,k+2,k-1,k-2,\dots,0,\dots,k+1).
\end{align*}

\begin{comment}
\cmt{Same here.}
Then $e_1\neq 0$ for the following tableaux (we highlighted in yellow the nodes which can contribute towards $\deg(\ttt)$, note that these are all $(k+2)$-nodes).
\begin{center}
\begin{tabular}{ m{2.5cm}m{2.5cm}m{2.5cm}m{2.5cm} } 
\multicolumn{2}{c}{
\begin{tikzpicture}[scale=1.7]
\scalefont{0.8}
\tgyoung(0cm,3cm,12:n,3,|2\vdts,<n{-}1>,:n)
\Yfillcolour{yellow!30!}
\tgyoung(0cm,3cm,::;n,:,:,:,:,n)
%\draw[<->] (0.45,0.93)--(1.05,0.93)--(1.97,3.25)--(1.37,3.25);
%\node at (2,2) {\begin{tabular}{c}
   % \scriptsize{addable} \\
   % \scriptsize{$k+2$-nodes}
%\end{tabular}};
\end{tikzpicture}
}
&  
\multicolumn{2}{c}{
\begin{tikzpicture}[scale=1.7]
\scalefont{0.8}
\tgyoung(0cm,3cm,12<x{+}1>_2\hdts<n{-}1>n,3,|2\vdts,x,:n)
\Yfillcolour{yellow!30!}
\tgyoung(0cm,3cm,::::::;n,:,:,:,:,n)
%\draw[<->] (0.45,0.93)--(1.05,0.93)--(2.75,2)--(3,3);
%\node at (1.8,2) {\begin{tabular}{c}
 %   \scriptsize{addable} \\
 %   \scriptsize{$k+2$-nodes}
%\end{tabular}};
\end{tikzpicture}
} \\
$(2,1^{n-2})$ & $(3,1^{n-3})$ & $(2p-2,1^x)$ &$(2p-1,1^y)$\\
 $\deg 0$ & $\deg 1$ & $\deg 1$ &  $\deg 2$  \\ 
\end{tabular}
\end{center}

Similarly, $e_2\neq0$ for four different tableaux (nodes highlighted in green are $(k+1)$-nodes and $x=2p-2$).

\begin{center}
\begin{tabular}{ m{2.5cm}m{2.5cm}m{2.5cm}m{2.5cm} } 
\multicolumn{4}{c}{
\hspace{0.8cm}
\begin{tikzpicture}[scale=1.7]
\scalefont{0.8}
\tgyoung(1cm,3cm,12_2\hdts xn,<x{+}1>,|2\vdts,<n{-}1>,:n)
{\Yfillcolour{green!30!}
\tgyoung(1cm,3cm,:::::;n,:,:,:,:,n)}
\tgyoung(4.5cm,3cm,12_2\hdts xn,<x{+}1><n{-}1>:,|2\vdts,<n{-}2>)
{\Yfillcolour{green!30!}
\tgyoung(4.5cm,3cm,:::::;n,::;n,:,:,:)}
\end{tikzpicture}
}\vspace{0.5cm} \\
$(2p-1,1^y)$ & $(2p,1^x)$ & $(2p-1,3,1^x)$& $(2p,2,1^x)$\\
 $\deg 0$ & $\deg 1$ & $\deg 1$ &  $\deg 2$  \\ 
\end{tabular}
\end{center}
\end{comment}

Using \cref{thm:dim}, we compute the graded dimensions for any $1\le k\le \ell/2$:
\begin{align*}
    \dim_qe_iR^{\La_k}(\delta)e_i&=1+c_{i,\ell}q^2+q^4;\\
    \dim_qe_iR^{\La_k}(\delta)e_j&=\dim_qe_jR^{\La_k}(\beta)e_i=q^2,
\end{align*}
where 
\begin{equation*}
c_{i,\ell}=
\begin{cases}
1 & \text{if}\qquad \ell=3,i=1\\
2 & \text{otherwise}.
\end{cases}
\end{equation*}

As $c_{i,\ell}>0$, we have that $e\fqH{k}(\delta)e_1$ and $e\fqH{k}(\delta)e_2$ are pairwise non-isomorphic, indecomposable projective $A$-modules and in particular that $R^{\La_k}(\delta)$ is of wild representation type for $\ell\ge 3$ by \cref{trick} and \cite[I.10.8(iv)]{er90}.
\end{proof}

\section{Representations of \texorpdfstring{$R^{\La_k}(\delta-\xik{k}{\pm 2})$}{Rk(d)}}

In this section we see that blocks of defect one have finite representation type, and moreover that they are equivalent to a Brauer tree algebra.
We also demonstrate that depending on $k$, there are two distinct possibilities for the number of simple modules in block algebras of defect one. Note that every block algebra of defect one is derived equivalent to $\fqH{k}(\delta-\xik{k}{\pm i})$ by \cref{lem:defcalc,prop:maximal,reptype}, 
%a block algebra of defect one is derived equivalent to $R^{\La_k}(m\delta - \xik{k}{\pm i})$ with $\df(m\delta - \xik{k}{\pm i}) = 2m - i/2 = 1 $ for $ m \ge i/2 $, and so we uniquely have that $ m = 1 $ and $ i = 2 $.
where
\begin{align*}
    \delta-\xik{k}{2}&=\al_0+2\al_1+\cdots+2\al_{k}+\al_{k+1},\text{ and} \\
    \delta-\xik{k}{-2}&=\al_{k-1}+2\al_{k}+\cdots+2\al_{\ell-1}+\al_\ell
\end{align*}
for $k\pm2 \in I$.

\begin{prop}\label{prop:xik2}
    $R^{\La_k}(\delta-\xik{k}{\pm 2})$ is of finite representation type.
\end{prop}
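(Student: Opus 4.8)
The plan is to reduce to one explicit family of blocks, compute the basic algebra via the graded dimension formula, and recognise it as the Brauer tree algebra of a line, which is representation-finite. By \cref{lem:defcalc} we have $\df(\delta - \xik{k}{\pm 2}) = 1$, so this is the first case not covered by \cref{prop:defect0}. Applying \cref{cor:blockisom} with the diagram automorphism $\sigma(i) = \ell - i$ (under which $\delta$ is fixed and $\sigma\xik{k}{-2} = \xik{\ell-k}{2}$) gives $R^{\La_k}(\delta - \xik{k}{-2}) \cong R^{\La_{\ell-k}}(\delta - \xik{\ell-k}{2})$, so it suffices to analyse $R^{\La_k}(\delta - \xik{k}{2})$ for all admissible $k$; the two distinct Morita classes will then emerge from the $k+1$ and $\ell - k + 1$ simple-module counts.

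First I would enumerate the simple modules. By the categorification theorem these are labelled by the Kleshchev partitions $\lambda$ with $\cont(\lambda) = \delta - \xik{k}{2}$, which I would list directly in the crystal of $V(\La_k)\subset\mathcal{F}(\kappa)$ using the reduced $i$-signature rules; since this content has height $2k+2$ and defect one, the list is short, and I expect exactly $k+1$ such partitions. In parallel, \cref{thm:dim} and its corollary identify the nonzero idempotents $e(\nu)$ with the residue sequences of standard tableaux of these partitions, which lets me fix a complete set of primitive idempotents $e_1, \dots, e_{k+1}$ and compute the full graded matrix $\big(\dim_q e_i R^{\La_k}(\delta - \xik{k}{2}) e_j\big)$ from \cref{thm:dim}.

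With these dimensions in hand, the argument proceeds as in \cref{tame}: the algebra is non-negatively graded, its degree-zero part is $A/\Rad(A)$, and one reads off the indecomposable projectives and their radical series. The key contrast with the wild cases of \cref{onedelta} is that here every diagonal dimension $\dim_q e_i A e_i$ should be of the form $1 + q^{2d}$ with no intermediate term, so that the quiver carries \emph{no loops}, while the off-diagonal dimensions produce a single arrow in each direction between consecutive vertices only. I expect the resulting quiver to be the doubled type-$A$ line $1 \rightleftarrows 2 \rightleftarrows \cdots \rightleftarrows k+1$, with interior projectives of shape $P_i \simeq S_i / (S_{i-1}\oplus S_{i+1}) / S_i$ and the two end projectives uniserial of Loewy length three; together with the evident zero and commutativity relations this is exactly the Brauer tree algebra of a straight line with no exceptional vertex. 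Finite representation type is then immediate, since Brauer tree algebras are representation-finite.

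The main obstacle is establishing the radical structure and relations uniformly in $k$ and $\ell$, in particular ruling out any loop or a second arrow between adjacent vertices, as either would force tame or wild type by \cref{trick}. To control the tops and socles of the projectives I would build them inductively from the defect-zero (simple) blocks using the induction and restriction functors $F_i, E_i$ together with the $\mathfrak{sl}_2$-relations of \cref{endo}, and I anticipate that the heaviest bookkeeping occurs at the boundary residues $0$ and $k+1$ and at the long simple root $\alpha_0$, where the grading shifts differ from the short-root interior of the diagram.
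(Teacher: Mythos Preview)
Your strategy is essentially the paper's: list the partitions with content $\delta-\xik{k}{2}$, identify the Kleshchev ones (there are $k+1$), pick one idempotent $e(\nu)$ per simple (the paper uses the initial-tableau residue sequence of each Kleshchev partition), compute the graded Cartan entries via \cref{thm:dim}, and read off the Brauer line. Your reduction of the $\xik{k}{-2}$ case to $\xik{\ell-k}{2}$ via \cref{cor:blockisom} is a legitimate shortcut the paper does not take; it instead runs the two signs in parallel with a second explicit family of partitions $\mu(i)$, which has the side benefit of exhibiting the two Morita classes (with $k+1$ versus $\ell-k+1$ simples) directly.

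Where you diverge is in your last paragraph, and there you are making life harder than necessary. The paper does \emph{not} invoke $F_i$, $E_i$ or \cref{endo} at all: the graded dimension computation already settles everything. One finds
\[
\dim_q e_i\,\fqH{k}(\delta-\xik{k}{2})\,e_j=\begin{cases}1+q^2&j=i,\\ q&j=i\pm1,\\ 0&\text{otherwise},\end{cases}
\]
so the algebra is non-negatively graded, its radical is the positive-degree part, and the composition series of each $P_i$ is forced by these numbers alone: one $S_i$ in degree $0$, one $S_{i\pm1}$ in degree $1$, one $S_i$ in degree $2$, nothing else. The ``main obstacle'' you flag---a hidden loop or an extra arrow---is therefore ruled out on the spot by the $1+q^2$ diagonal and the single $q$ off-diagonal, with no inductive functor argument required. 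Once you have chosen the right idempotents and evaluated \cref{thm:dim}, the proof is finished.
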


\begin{proof}
First consider the case for $\delta-\xik{k}{2}$.
The partitions belonging to the block algebra $R^{\La_k}(\delta-\xik{k}{2})$ are $ \lambda(i) = (2^{k-i+1},1^{2i}) $ for $0 \le i \le k+1$. 

The partition $\la(0) = (2^{k+1})$ is not Kleshchev since it has only one removable $1$-node, which is not a good node as it is immediately followed by an addable $1$-node (when reading from top to bottom), and hence does not show up in the reduced $1$-signature, and so $(2^{k+1})$ cannot correspond to a vertex in the crystal graph. 
The remaining partitions are all Kleshchev since each of them has a good (removable) node, and we may traverse against the directed edges on the crystal graph by continuing to remove good nodes until the empty partition $\varnothing_k$ is reached.
In more detail, starting from each partition we can move up the crystal graph as follows: the first column of $\la(i)$ has residue $(k,k-1,\ldots,1,0,1,\ldots i)$ and the second column has residue $ (k+1,k,\ldots,i+1) $ and so $\la(i)$ has a good $i$-node. The partition obtained by removing that good node has itself a good $(i-1)$-node, and the partition obtained from by removing that good node has a good $(i-2)$-node, and so on until the partition $(2^{k-i+1})$ is reached. Here, we have a unique removable node with residue $i+1$, and since $i>0$ that node is good and can be removed to move up the crystal graph. Continuing as before, the remainder of the second column, and subsequently the rest of the first column, can be removed by a sequence of good nodes until the empty partition $\varnothing_k$ is reached.
This is by no means the only possible path back to $\varnothing_k$, but to show that $\la(i)$ is Kleshchev for $ 1 \le i \le k+1 $ it suffices to exhibit one such path.

%These idempotents are sufficient: see Evseev-Mathas Lemma~5A.3 cf. Definition~4A.5 and $z_\la$
Let $ e_i $ be the idempotent corresponding to the residue sequence for the initial tableau for each Kleshchev partition $\la(i)$ for $0 \le i \le k$; in particular $e_i = e(\nu(i))$ where 
$$
 \nu(0) = (k, k-1,\ldots,1,0,1,\ldots,k,k+1)
$$
and for $1 \le i \le k$, the residue sequence $\nu(i)$ is
$$
 \nu(i) = (k, k+1, k-1, k, \ldots, k-i+1, k-i+2, k-i, k-i-1, \ldots, 1,0,1, \ldots, k-i+1)
$$

From \cref{thm:dim} above, we have the following graded dimensions:
$$
\dim_{q} e_i\fqH{k}(\delta-\xik{k}{2}) e_j=
\begin{cases} 1+{q}^2 \quad &\text{if $j=i$}, \\
{q} \quad &\text{if $j=i\pm 1$}, \\
 0 \quad &\text{otherwise}.
 \end{cases}
$$
Hence, $\Rad \fqH{k}(\delta-\xik{k}{2}) $ is spanned by homogeneous elements of positive degree, and $P_i=\fqH{k}(\delta-\xik{k}{2}) e_i$ are indecomposable and pairwise non-isomorphic projective modules and hence we can conclude that the radical series of $P_i$ is given by
$$
P_0=\begin{array}{c} S_0 \\ S_1 \\ S_0 \end{array}, \quad
P_i=\begin{array}{c} S_i \\ S_{i-1}\oplus S_{i+1} \\ S_i \end{array}\;\text{($1\le i\le k$)}, \quad
P_{k}=\begin{array}{c} S_k \\ S_{k-1} \\ S_k \end{array}
$$
where $S_i = \Top(P_i)$, and so $R^{\La_k}(\delta-\xik{k}{2})$ is Morita equivalent to a Brauer tree algebra whose Brauer tree is a straight line (cf. \cite[Prop.~5.1]{arikirep}). 
Moreover, the idempotents $e_1,e_2,\dots,e_k$ give a complete list of the pairwise non-isomorphic primitive idempotents of $\fqH{k}(\de-\xik{k}{2})$.

For the case $\delta-\xik{k}{-2}$, we can apply the same argument where the partitions belonging to $R^{\La_k}(\delta-\xik{k}{-2})$ are $\mu(i) = (\ell-k+i+1,\ell-k-i+1)$ for $0 \le i \le \ell-k+1$, all Kleshchev except when $ i = \ell-k+1 $
(by a similar reasoning to the previous case) and so in this case the idempotents correspond to the residue sequence $$ \upsilon(i) = (k, k+1, \ldots, \ell-1, \ell, \ell-1, \ldots, \ell-i, k-1, k, \ldots, \ell-i-1),  $$
for $0\le i \le \ell-k$ instead.
\end{proof}

\begin{cor}
    When $ k \neq \ell/2 $ and $ 2 \le k \le \ell-2 $, there are two inequivalent Morita equivalence classes of blocks of defect one with distinct number of simple modules $k+1$ or $\ell-k+1$ respectively.
\end{cor}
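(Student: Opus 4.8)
The plan is to reduce the statement to a comparison of the number of simple modules of the two representative blocks, exploiting the explicit straight-line Brauer tree description obtained in \cref{prop:xik2}. First I would note that the hypothesis $2\le k\le\ell-2$ guarantees $k+2\in I$ and $k-2\in I$, so that both $\xik{k}{2}$ and $\xik{k}{-2}$ are defined via \eqref{xik} and both $\fqH{k}(\delta-\xik{k}{2})$ and $\fqH{k}(\delta-\xik{k}{-2})$ are nonzero block algebras of defect one by \cref{lem:defcalc}. As recorded at the start of this section, combining \cref{lem:defcalc,prop:maximal,reptype} shows that every defect-one block is derived equivalent to one of these two representatives: the defect-one condition $2m-\frac{i}{2}=1$ together with $m\ge i/2$ forces $m=1$ and $i=2$, so only $\delta-\xik{k}{\pm2}$ can occur.

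Next I would read off the number of simple modules of each representative directly from \cref{prop:xik2}. For $\fqH{k}(\delta-\xik{k}{2})$ the Kleshchev partitions are $\lambda(1),\dots,\lambda(k+1)$, since $\lambda(0)=(2^{k+1})$ is excluded, giving $k+1$ simple modules; equivalently, the primitive idempotents $e_0,\dots,e_k$ number $k+1$. For $\fqH{k}(\delta-\xik{k}{-2})$ the Kleshchev partitions are $\mu(0),\dots,\mu(\ell-k)$, since $\mu(\ell-k+1)$ is excluded, giving $\ell-k+1$ simple modules. By \cref{prop:xik2} both algebras are Morita equivalent to straight-line Brauer tree algebras with no exceptional vertex, of $k+1$ and $\ell-k+1$ edges respectively.

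The distinctness is then immediate: since $k\neq\ell/2$ we have $k+1\neq\ell-k+1$, and the number of simple modules is invariant under Morita (indeed derived) equivalence, so the two representatives lie in different Morita classes. To conclude that there are \emph{exactly} two classes, I would invoke the Morita classification of Brauer tree algebras: a straight-line Brauer tree algebra with no exceptional vertex is determined up to Morita equivalence by its number of edges. Since every defect-one block is derived equivalent to one of the two representatives, it shares that representative's simple count, and being itself such a straight-line Brauer tree algebra by \cref{prop:xik2}, it must be Morita equivalent to the representative with the matching number of edges.

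The main obstacle is the passage from derived to Morita equivalence. \cref{reptype} and the general machinery only supply derived equivalences between blocks of equal defect, which a priori preserve the number of simple modules but not the finer Morita type. The crux is therefore to lean on the explicit straight-line Brauer tree description from \cref{prop:xik2} together with the rigidity of such trees (with no exceptional vertex) up to Morita equivalence, so that the single invariant — the number of edges — simultaneously separates the two classes and pins down each defect-one block within its class.
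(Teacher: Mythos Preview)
Your argument is essentially the same as the paper's: you read off the simple counts from the explicit description in \cref{prop:xik2}, use $k\neq\ell/2$ to see $k+1\neq\ell-k+1$, and invoke \cref{lem:defcalc,prop:maximal,reptype} to conclude that every defect-one block has one of these two simple counts. That is exactly what the paper does (modulo a sign slip in the paper's own proof, where the labels $+2$ and $-2$ are swapped relative to \cref{prop:xik2}; your assignment is the correct one).

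Where you go beyond the paper is in arguing that there are \emph{exactly} two Morita classes, via rigidity of straight-line Brauer tree algebras. This is a reasonable reading of the statement and your argument is sound in principle, but there is a small attribution issue: \cref{prop:xik2} only gives the Brauer-line description for the two representative blocks $\fqH{k}(\delta-\xik{k}{\pm2})$, not for an arbitrary defect-one block. The upgrade to arbitrary blocks is exactly the content of \cref{thm:xik2}, which appears \emph{after} this corollary and uses additional input (cellularity, symmetry, Ohmatsu's theorem). So if you want the ``exactly two'' conclusion at this point in the paper, you would need to forward-reference \cref{thm:xik2} rather than \cref{prop:xik2}. The paper's own proof is content to establish two inequivalent classes distinguished by simple count, without the sharper ``exactly two'' claim at this stage.
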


\begin{proof}
    From the proof of \cref{prop:xik2}, we see that $R^{\La_k}(\delta-\xik{k}{-2})$ has $k+1$ simple modules and $R^{\La_k}(\delta-\xik{k}{+2})$ has $\ell-k+1$ simple modules.
    By \cref{lem:defcalc,prop:maximal,reptype} all blocks of defect one have the same number of simple modules as either one of these two maximal weight cases.
\end{proof}
In \cite{aps}, the authors construct for cyclotomic KLR algebras of type $C^{(1)}_\ell$ the Specht modules $S^\la$ where $\la$ is a (multi)partition. The following corollary describes the graded decomposition matrices for blocks of defect one in level one, $ \La = \La_k $.
\begin{cor}\label{cor:defect1}
    Let $m \in \{ k+1, \ell-k+1 \}$ be the number of simple modules for a block algebra of defect one. Then the graded decomposition multiplicity is

    $ [S^{\la(i)}:D^{\la(j)}]_q =
    \begin{cases}
          1 &\text{ if }  i=j,   \\
          q &\text{ if }  i=j+1 \text{ for } 1 \le i \le m+1, \\
          0 &\text{otherwise, } \\
    \end{cases} \\
    $
    where $\la(i)$ is as in the proof of \cref{prop:xik2} above when $m = k+1$, and $\la(i)=\mu(i)$ when $ m = \ell-k+1 $.
\end{cor}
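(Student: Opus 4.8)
The plan is to identify each graded decomposition number with a single graded weight-space dimension of a Specht module, and then to read that dimension off the tableau combinatorics already assembled for \cref{prop:xik2}. Throughout write $P_j = \fqH{k}(\delta-\xik{k}{\pm 2})e_j$, where $e_j = e(\nu(j))$ are the primitive idempotents produced in \cref{prop:xik2}, so that $P_j$ is the projective cover of $D^{\la(j)} = \Top(P_j)$. Since $\bbf$ is algebraically closed, graded $\operatorname{Hom}$-dimension computes graded composition multiplicity, and combining this with the graded isomorphism $\operatorname{Hom}_{\fqH{k}(\beta)}(\fqH{k}(\beta)e_j, M)\cong e_j M$ gives, for any module $M$,
\[
\dim_q e(\nu(j))M \;=\; \dim_q \operatorname{Hom}(P_j, M) \;=\; [M : D^{\la(j)}]_q .
\]
Taking $M = S^{\la(i)}$ and recalling that the graded Specht module of \cite{aps} has a homogeneous basis indexed by $\ST{\la(i)}$, with $\dim_q e(\nu)S^{\la(i)} = K_q(\la(i),\nu)$ (the very input to \cref{thm:dim}), I obtain the clean identity
\[
[S^{\la(i)} : D^{\la(j)}]_q \;=\; K_q(\la(i),\nu(j)).
\]

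The problem is thus reduced to evaluating $K_q(\la(i),\nu(j)) = \sum_{\ttt} q^{\deg\ttt}$, the sum over standard $\la(i)$-tableaux whose residue sequence equals the initial residue sequence $\nu(j)$ of $\la(j)$. I would establish the three cases as follows. For $i=j$ the initial tableau $\ttt^{\la(i)}$ has residue $\nu(i)$; using the degree recursion $\deg\ttt = \deg\tabupto{\ttt}{n-1} + d_{A}(\la)$ one checks $\deg \ttt^{\la(i)} = 0$, and since $\la(i)$ is one of the two-row (resp.\ two-column) partitions of a defect-one block, a direct enumeration of $\ST{\la(i)}$ shows it is the unique standard tableau of that residue, giving $K_q(\la(i),\nu(i)) = 1$. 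For $i = j+1$ I would exhibit the unique standard $\la(j{+}1)$-tableau of residue $\nu(j)$, obtained from $\ttt^{\la(j)}$ by moving the single box distinguishing $\la(j)$ from $\la(j{+}1)$, and compute its degree to be exactly $1$, so that $K_q(\la(j{+}1),\nu(j)) = q$. For all remaining $j$ the residue multiset forces $\ST{\la(i)}$ to contain no tableau of residue $\nu(j)$, whence $K_q = 0$.

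These values are consistent with, and can be cross-checked against, the graded Cartan numbers already computed in \cref{prop:xik2}: indeed \cref{thm:dim} reads $\dim_q e_i\fqH{k}(\delta-\xik{k}{2})e_j = \sum_{\la} K_q(\la,\nu(i))K_q(\la,\nu(j))$, and substituting the bidiagonal values above reproduces exactly the tridiagonal matrix with diagonal $1+q^2$ and off-diagonal $q$ found there. In fact, granting that the block has defect one, so that all ungraded decomposition numbers lie in $\{0,1\}$ and the Brauer tree of \cref{prop:xik2} is a line, this Cartan identity alone pins the off-diagonal graded entry to $q$ once the diagonal entry is normalised to $1$ (the degree-$0$ simple top). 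The case $\delta-\xik{k}{-2}$ follows by the identical argument with $\la(i)$ replaced by $\mu(i)$; the two computations are interchanged by the Dynkin diagram automorphism $i\mapsto \ell-i$ of \cref{cor:blockisom}, under which $\xik{k}{2}$ and $\xik{k}{-2}$ swap roles. In each case exactly one endpoint partition ($\la(0)$, resp.\ $\mu(\ell-k+1)$) is non-Kleshchev, so the corresponding Specht module contributes no diagonal factor and is simple up to a degree shift, which produces the boundary row of the decomposition matrix (with the orientation fixed by the indexing of \cref{prop:xik2}).

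The main obstacle is the combinatorial core of the second paragraph: proving that for each admissible pair there is a \emph{unique} standard tableau of the prescribed residue, and pinning its degree to $0$ or $1$. Uniqueness is what rules out any larger decomposition numbers, and the degree count is precisely what upgrades the classical bidiagonal (ungraded) picture to the graded form with $q^1$ off the diagonal; both are governed by the explicit residue patterns of the two-row and two-column partitions, so the difficulty is careful bookkeeping rather than anything conceptual. If one prefers to avoid re-deriving these tableau degrees, the Cartan cross-check of the third paragraph supplies an independent route that leans entirely on \cref{prop:xik2}.
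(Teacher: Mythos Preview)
Your approach is correct but genuinely different from the paper's. You identify $[S^{\la(i)}:D^{\la(j)}]_q$ with the tableau polynomial $K_q(\la(i),\nu(j))$ via the primitivity of $e(\nu(j))$ established in \cref{prop:xik2} together with the standard-tableau basis of the Specht module from \cite{aps}, and then evaluate these sums directly (with the Cartan factorisation from \cref{thm:dim} as a consistency check). The paper instead reads the ungraded multiplicities off the radical layers of the $P_j$ already computed in \cref{prop:xik2}, and then invokes \cite[Cor.~6E.18]{EM22}, which says that in a block of defect $d$ the bottommost nonzero entry of each column of the graded decomposition matrix equals $q^{d}$; with $d=1$ this pins the subdiagonal entries to $q$ in one line. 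Your route is self-contained and avoids the external citation, at the price of the tableau enumeration and degree bookkeeping you correctly flag as the remaining work; the paper's route is shorter but leans on a black-box result.

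One small slip to fix if you write this up: in the ``otherwise'' case you say ``the residue multiset forces $\ST{\la(i)}$ to contain no tableau of residue $\nu(j)$'', but all the $\nu(j)$ lie in the same block and hence share the same residue multiset. What actually rules out such tableaux is the specific residue \emph{sequence} $\nu(j)$ in conjunction with the two-column (respectively two-row) shape of $\la(i)$; this needs the direct enumeration you allude to, not a multiset argument. Your Cartan cross-check does circumvent this, since the tridiagonal form of $\dim_q e_iAe_j$ from \cref{prop:xik2} together with unitriangularity forces the bidiagonal shape of the decomposition matrix without enumerating tableaux.
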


\begin{proof}
    From the proof of \cref{prop:xik2}, the ungraded decomposition multiplicities are $[S^{\la(i)}:D^{\la(j)}] = 1$ when $i=j$ or $i=j+1$ or $1 \le i \le m+1$. By Corollary~6E.18 of \cite{EM22} when the defect $d$ is 1, the bottom-most nonzero entry of each column of the graded decomposition matrix is $ [S^{\la(i)}:D^{\la(i-1)}]_q = q^{d} = q$ for $1\le i \le m+1$.
\end{proof}

We note here that $R^{\La_k}(\delta-\xik{k}{\pm 2})$ is an indecomposable algebra.

\begin{thm}\label{thm:xik2}
    Let $B$ be a block algebra of defect $1$. Then $B$ is of finite representation type and moreover $B$ is a Brauer tree algebra whose Brauer tree is a straight line with no exceptional vertex.
\end{thm}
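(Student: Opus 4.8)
The plan is to reduce the general case to the explicitly-computed maximal-weight representatives. By \cref{lem:def}, every nonzero block algebra has non-negative defect, and \cref{lem:defcalc} tells us that a block of defect one must have the form $R^{\La_k}(m\delta - \xik{k}{\pm i})$ with $2m - \tfrac{i}{2} = 1$. The only solutions in the permitted range $m \ge i/2$ with $i$ even are $m=1,\ i=2$; that is, the defect-one block weights are exactly $\delta - \xik{k}{\pm 2}$ up to the Weyl-group twisting described in \cref{prop:maximal}. So first I would invoke \cref{prop:maximal} together with \cref{reptype} to conclude that an arbitrary block $B$ of defect one is derived equivalent to one of the two algebras $R^{\La_k}(\delta - \xik{k}{+2})$ or $R^{\La_k}(\delta - \xik{k}{-2})$, and in particular shares their representation type and number of simple modules.

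Next I would apply \cref{prop:xik2}, which establishes that both $R^{\La_k}(\delta - \xik{k}{\pm 2})$ are of finite representation type and, via the explicit radical series of the indecomposable projectives $P_i$ displayed there, that each is Morita equivalent to a Brauer tree algebra whose Brauer tree is a straight line. The Loewy structure computed in that proof --- each $P_i$ has simple top and socle $S_i$ with heart $S_{i-1}\oplus S_{i+1}$ (or a single simple at the two ends) --- is precisely the defining pattern of the Brauer tree algebra of a line with \emph{no} exceptional vertex, since the two endpoint projectives $P_0$ and $P_k$ are uniserial of length three rather than carrying a higher multiplicity. I would make this identification explicit, noting that the exceptional multiplicity would manifest as a longer uniserial projective at the exceptional vertex, which does not occur here.

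The remaining point is to transfer these two structural conclusions --- finite representation type \emph{and} the straight-line-Brauer-tree-with-no-exceptional-vertex property --- from the maximal-weight representatives to the arbitrary block $B$. Finite representation type is a derived-equivalence invariant (indeed \cref{reptype} asserts equality of representation type directly), so that half is immediate. For the Brauer tree shape the cleanest argument is that a Brauer tree algebra is a symmetric algebra, its derived equivalence class is determined by the number of edges and the multiplicity of the exceptional vertex (by the theory of Brauer tree algebras, any two such algebras with the same number of edges and same exceptional multiplicity are derived equivalent, and conversely the Brauer tree invariants are recoverable from the derived category), and the defect-one blocks are self-injective by \cite[Proposition~4.2]{APA2}. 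Thus $B$, being derived equivalent to a Brauer tree algebra of a line with no exceptional vertex and having the same number of simple modules, is itself such a Brauer tree algebra.

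The main obstacle I anticipate is this last transfer step: showing that $B$ is \emph{itself} a Brauer tree algebra of the required shape, rather than merely derived equivalent to one. Derived equivalence alone does not force a finite-representation-type algebra to be Morita equivalent (or even stably equivalent) to the model; one genuinely needs that blocks of cyclotomic KLR algebras of defect one are symmetric (self-injective with a suitable symmetrizing form) and that finite-representation-type self-injective algebras whose stable category matches that of a Brauer tree line are again Brauer tree algebras with the same tree. I would lean on Rickard's derived-equivalence machinery for Brauer tree algebras (as already used in the proof of \cref{prop:defect0}), invoking the fact that Brauer tree algebras of a given tree with given exceptional multiplicity form a single derived-equivalence class and that this class, within self-injective algebras of finite representation type, pins down the Morita type up to the combinatorics of the tree --- so the straight-line-no-exceptional-vertex conclusion propagates to $B$.
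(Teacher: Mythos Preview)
Your reduction via \cref{prop:maximal}, \cref{lem:defcalc} and \cref{reptype} is correct and matches the paper: every defect-one block $B$ is derived equivalent to one of the model algebras $R^{\La_k}(\delta-\xik{k}{\pm 2})$, and so inherits finite representation type and the correct number of simples from \cref{prop:xik2}.

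The gap is in your final transfer step. You want to conclude that $B$ itself is a Brauer tree algebra whose tree is a straight line, but derived equivalence --- even together with self-injectivity and finite representation type --- does \emph{not} determine the shape of the Brauer tree. Rickard's theorem says precisely that two Brauer tree algebras are derived equivalent if and only if they have the same number of edges and the same exceptional multiplicity; the tree shape is \emph{not} an invariant. For instance, the Brauer star with $n$ edges and no exceptional vertex is derived equivalent (hence stably equivalent, hence has matching stable categories) to the Brauer line with $n$ edges, yet for $n\ge 3$ they are not Morita equivalent. So your proposed argument, which leans only on self-injectivity and the derived equivalence class, cannot distinguish these possibilities, and your phrase ``pins down the Morita type up to the combinatorics of the tree'' is exactly what defeats you: the combinatorics of the tree is what you are trying to establish. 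You also have not established that $B$ is a Brauer tree algebra at all.

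The paper supplies the missing ingredient: $B$ is (graded) \emph{cellular} and \emph{symmetric} by \cite[Thm.~A, \S4E]{EM22}, and indecomposable because derived equivalence preserves the centre. Ohmatsu's theorem (see \cite[\S8.2]{arikirep}) then applies: an indecomposable symmetric cellular algebra of finite representation type is a Brauer tree algebra whose tree is a straight line (possibly with an exceptional vertex at one end). The derived equivalence to the explicit Brauer line with no exceptional vertex then rules out the exceptional vertex. Cellularity is the structural input that forces the line shape; without it your argument cannot close.
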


\begin{proof}
    By \cref{lem:defcalc,prop:maximal,reptype,prop:xik2}, $B$ is of finite type and is derived equivalent to $R^{\La_k}(\delta-\xik{k}{\pm2})$, which is a Brauer line algebra.
    Moreover, derived equivalence preserves Hochschild homology so in particular, the zeroth Hochschild homology i.e. the center is preserved and hence $B$ is indecomposable as well.
    Furthermore, $B$ is (graded) cellular by \cite[Thm.~A]{EM22} and symmetric by \cite[\S4E]{EM22}.
    Hence, applying Ohmatsu's theorem and by the argument in \cite[\S8.2]{arikirep}, we conclude that $B$ is a Brauer tree algebra whose tree is a straight line with no exceptional vertex.
\end{proof}

\begin{rem}
   Thus we see that the algebra $\fqH{k}(\beta)$ is indecomposable if it has defect $0$ or $1$.
   We expect this to be true for any $\fqH{}(\beta)$ in any defect as well.
\end{rem}

\section{Representations of \texorpdfstring{$R^{\La_k}(2\delta-\xik{k}{\pm 4})$}{Rk(2d-xk)}}

Note that every block algebra of defect $2$ is derived equivalent to $R^{\La_k}(\delta)$ or $R^{\La_k}(2\delta-\xik{k}{\pm 4})$.
Observe that 
\begin{align*}
    \beta:=2\delta-\xik{k}{4}&=2\al_0+4\sum^k_1\al_j+3\al_{k+1}+2\al_{k+2}+\al_{k+3},\,\text{and}\\
     \gamma:=2\delta-\xik{k}{-4}&=\al_{k-3}+2\al_{k-2}+3\al_{k-1}+4\sum_k^{\ell-1}\al_j+2\al_\ell.
\end{align*}

\begin{lem}\label{xik4}
For $k\pm 4 \in I$, the algebra $R^{\La_k}(2\delta-\xik{k}{\pm 4})$ has wild representation type.
\end{lem}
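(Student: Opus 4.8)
The plan is to run the same machinery as in \cref{onedelta}: produce two orthogonal idempotents, feed their pairwise graded dimensions into \cref{trick}, and read off a wild Gabriel quiver via \cite[I.10.8(iv)]{er90}. First, by \cref{cor:blockisom} with the Dynkin automorphism $\sigma(i)=\ell-i$ one checks $\sigma\delta=\delta$ and $\sigma(\xik{k}{-4})=\xik{\ell-k}{+4}$, so that $\fqH{k}(2\delta-\xik{k}{-4})\cong\fqH{\ell-k}(2\delta-\xik{\ell-k}{+4})$; hence it suffices to treat $\beta:=2\delta-\xik{k}{4}$ for every $k$ with $k+4\in\I$, and the $-4$ case follows for all admissible $k$. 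I would then list the partitions $\lambda\vdash 4k+8$ with $\cont(\lambda)=\beta$ that label the simple and projective modules of this block, together with their residue patterns, just as the $\lambda(i)=(2^{k-i+1},1^{2i})$ were enumerated in \cref{prop:xik2}.

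Next I would select two residue sequences $\nu_1,\nu_2\in\I^\beta$, each of the form $\res{\ttt}$ for a standard tableau so that $e_i:=e(\nu_i)\ne0$ by the corollary following \cref{thm:dim}, chosen so that the corner algebra $e\fqH{k}(\beta)e$ with $e=e_1+e_2$ has a wild quiver. Concretely, I expect to exhibit $\nu_1,\nu_2$ for which \cref{thm:dim} yields dimensions of exactly the shape required by \cref{trick} (in particular with vanishing $q$-coefficient), namely
\[
\dim_q e_i\fqH{k}(\beta)e_i = 1 + c_{ii}q^2 + (\text{higher order}),\qquad
\dim_q e_i\fqH{k}(\beta)e_j = c_{ij}q^2 + (\text{higher order})\quad(i\ne j),
\]
with $c_{ii}\ge1$ and $c_{12},c_{21}\ge1$, mirroring the table $\dim_q e_i\fqH{k}(\delta)e_i=1+c_{i,\ell}q^2+q^4$ of \cref{onedelta}. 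Each entry $\dim_q e_i\fqH{k}(\beta)e_j=\sum_{\lambda}K_q(\lambda,\nu_i)K_q(\lambda,\nu_j)$ is computed by counting, for each contributing $\lambda$, the standard $\lambda$-tableaux of residue $\nu_i$ and recording their degrees; since only the $q^2$-coefficients enter \cref{trick}, it suffices to track tableaux of degree $0$ and $1$ (and how the degree-$1$ tableaux pair up in the diagonal entries), which keeps the enumeration finite and, I expect, independent of $k$ and $\ell$.

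With these dimensions in hand, \cref{trick} shows that the quiver of $e\fqH{k}(\beta)e$ has two vertices carrying $c_{ii}\ge1$ loops each and at least one arrow in each direction between them; a quiver of this shape forces wild representation type by \cite[I.10.8(iv)]{er90}, whence $\fqH{k}(\beta)$ is wild exactly as concluded in \cref{onedelta}. The main obstacle is the second step: the residue sequences must be chosen so that they are simultaneously realised by standard tableaux, produce a loop at each vertex, and produce an arrow in each direction, while the pattern is rigid enough that only a few partitions $\lambda$ contribute low-degree tableaux and the polynomials $K_q(\lambda,\nu_i)$ can be evaluated by hand. I anticipate that the extremal case $\ell=k+4$ (where $\xik{k}{4}$ reaches the end of the Dynkin diagram and some addable/removable node counts degenerate) will require a separately chosen pair of idempotents or a separate dimension table, just as $\ell=3$ was singled out in \cref{onedelta}.
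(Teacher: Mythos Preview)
Your plan matches the paper's approach: pick two idempotents, compute graded dimensions via \cref{thm:dim}, apply \cref{trick}, and conclude wildness by \cite[I.10.8(iv)]{er90}. Two small differences are worth noting. First, the paper does not use $\sigma$ to convert the $-4$ case into a $+4$ case; instead it reduces to $0<k\le\ell/2$ via \cref{cor:blockisom} and then treats $\beta=2\delta-\xik{k}{4}$ and $\gamma=2\delta-\xik{k}{-4}$ with two separate explicit pairs of idempotents. Your reduction $\sigma(\xik{k}{-4})=\xik{\ell-k}{+4}$ is correct and arguably tidier, but note that after it you must handle the $+4$ case for all $0\le k\le\ell-4$, not just $k\le\ell/2$. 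Second, your anticipated extremal subcase does not materialise in the paper: for both $\beta$ and $\gamma$ the chosen idempotents give uniformly
\[
\dim_q e_i\fqH{k}(\kappa)e_i=1+2q^2+q^4,\qquad \dim_q e_i\fqH{k}(\kappa)e_j=q^2\quad(i\ne j),
\]
with no separate treatment at the boundary (the residues occurring in $\beta$ stop at $k+3\le\ell-1$, so the endpoint $\ell$ never interferes). The case $k=0$ is deferred to \cite[Thm.~4.2]{apc}.
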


\begin{proof}
If $k=0$, this is proved in \cite[Thm.~4.2]{apc}.
Suppose $k\neq0$ and $k\le \ell/2$.
Then, $k+4\in I$ implies that $\ell > 4$ and $k-4 \in I $ implies that $\ell \ge 2k \ge 8$ and so we may assume that $\ell > 4$.
Just as in the proof of \cref{onedelta}, we set $e=e_1+e_2$.
First, we consider the algebra $A=eR^{\La_k}(\beta)e$ where we choose
\begin{align*}
        e_1=&(k,k-1,\dots,0,\dots,k+3,k+1,k+2,k,\dots,0,\dots,k+1)\\
        e_2=&(k,k+1,k+2,k+3,k-1,k-2,\dots,0,\dots,k+2,\\
        &\hspace{1cm}k,k+1,k-1,k-2,\dots,0,\dots,k).
\end{align*}

For shorthand notation, let $\underline{i}:=(i,i-1,i-2,i-3)$.
Let $B=eR^{\La_k}(\gamma)e$ and set

\begin{align*}
    e_1=&(\underline{k},\underline{k+1},\dots,\underline{\ell},\ell-1,\ell-2,\ell,\ell-1)\\
    e_2=&(\underline{k},\underline{k+1},\dots,\underline{k+d-2},\ell-1,\ell,\ell-1,\ell-2,\ell-3,\\
    &\hspace{1cm}\ell-4,\ell-2,\ell-3,\ell-1,\ell,\ell-1,\ell-2).
\end{align*}

\begin{comment}
\cmt{I will delete the degrees.}
For $e_1$, we have the following shapes and
corresponding degrees: 
\begin{itemize}
    \item $ \deg 0:(p+3,p+2,p+2,p+1)$,
    \item $\deg 1:(p+3,p+3,p+1,p+1)$
    \item $\deg 1:(p+4,p+2,p+2,p) $
    \item $\deg 2:(p+4,p+3,p+1,p)$;
\end{itemize}

and for $e_2$: 
\begin{itemize}
    \item $ \deg 0:(p+4,p+3,p+1,p)$,
    \item $\deg 1:(p+4,p+4,p,p)$
    \item $\deg 1:(p+5,p+3,p+1,p-1) $
    \item $\deg 2:(p+5,p+4,p,p-1)$.
\end{itemize}
\end{comment}

Then using \cref{thm:dim}, we can compute the graded dimensions for $\kappa=\beta,\gamma$ and any $k>0$:
\begin{align*}
    \dim_qe_iR^{\La_k}(\kappa)e_i&=1+2q^2+q^4;\\
    \dim_qe_iR^{\La_k}(\kappa)e_j&=\dim_qe_jR^{\La_k}(\beta)e_i=q^2.
\end{align*}

Thus $R^{\La_k}(\kappa)$ is wild by the same reasoning as in \cref{onedelta}.
\end{proof}

\section{Representations of \texorpdfstring{$R^{\La_k}(\beta)$}{Rk(b)}}

In this final section, we prove two important lemmas that will enable us to generalise our results for all $R^{\La_k}(\beta)$.
After stating the main theorem, we also rewrite it in terms of defect and compare it with the original statement of Erdmann--Nakano.

\begin{lem}[{\cite[Prop.~2.3]{EN02}}, {\cite[Rem.~5.10]{apd}}]
\label{functor}
Let $A$ and $B$ be finite dimensional $\bbf$-algebras and suppose that
there exists a constant $C>0$ and functors
$$
F:\;A\text{\rm -mod} \rightarrow B\text{\rm -mod}, \quad
G:\;B\text{\rm -mod} \rightarrow A\text{\rm -mod}
$$
such that, for any $A$-module $M$,
\begin{itemize}
\item[(1)]
$M$ is a direct summand of $GF(M)$ as an $A$-module,
\item[(2)]
$\dim F(M)\le C\dim M$.
\end{itemize}
Then, if $A$ is wild, so is $B$.
\end{lem}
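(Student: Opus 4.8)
The plan is to reduce to the standard characterisation of wild representation type in terms of a representation embedding of the free algebra on two generators. Write $\Gamma=\bbf\langle x,y\rangle$. Since $A$ is wild, there is an exact $\bbf$-linear functor $H\colon\Gamma\mod\to A\mod$, which we may take of the form $H=P\otimes_\Gamma-$ for an $A$-$\Gamma$-bimodule $P$ that is finitely generated and free as a right $\Gamma$-module, such that $H$ preserves indecomposability and reflects isomorphism; in particular $\dim H(N)\le r\dim N$ for $r=\rank_\Gamma P$. The strategy is to show that the composite $\Phi:=F\circ H\colon\Gamma\mod\to B\mod$ is again a representation embedding, which by definition forces $B$ to be wild.

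First I would dispose of the size bookkeeping. By hypothesis (2), $\dim\Phi(N)=\dim F(H(N))\le C\dim H(N)\le Cr\dim N$, so $\Phi$ carries finite-dimensional $\Gamma$-modules to finite-dimensional $B$-modules with dimension growing at most linearly in $\dim N$. This linear bound is exactly what is needed for the image of $\Phi$ to constitute a genuine bounded family of $B$-modules, so that Krull--Schmidt is available throughout and the transferred parametrisation has the right shape.

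The heart of the argument is to promote the pointwise splitting in (1) to the two defining properties of a representation embedding. For indecomposability, let $N$ be an indecomposable $\Gamma$-module and put $M=H(N)$, so that $\End_A(M)$ is local. Fix a split pair $\iota\colon M\to GF(M)$, $\pi\colon GF(M)\to M$ with $\pi\iota=\mathrm{id}_M$ coming from (1), and consider the unital $\bbf$-linear map $\rho\colon\End_B(F(M))\to\End_A(M)$, $\rho(h)=\pi\,G(h)\,\iota$. If $F(M)=X\oplus Y$ were a nontrivial decomposition with associated idempotent $h$, then $\rho(h)+\rho(1-h)=\mathrm{id}_M$, and locality of $\End_A(M)$ forces one of $\rho(h),\rho(1-h)$ to be invertible; since $G$ is additive, $GF(M)=G(X)\oplus G(Y)$ and this invertibility exhibits $M$ as a direct summand of $G(X)$ or of $G(Y)$. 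Running this together with the reflection-of-isomorphism property of $H$ and the fact that (1) holds for every $A$-module (so the summand $M$ cannot be absorbed without contradicting both that $H$ preserves indecomposability and the linear dimension control) is what rules out such a splitting. The analogous bookkeeping with $\rho$ applied to an isomorphism $F(H(N))\cong F(H(N'))$, pushed through $G$ and down to $M=H(N)$ and $M'=H(N')$ via their splittings, shows $H(N)\cong H(N')$, whence $N\cong N'$ because $H$ reflects isomorphism.

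The main obstacle is precisely this last step: condition (1) is only a pointwise direct-summand statement, not a functorial retraction, so neither need $F$ alone preserve indecomposability nor need $G\circ F$ be naturally split off $\mathrm{id}$. The real work is to see that locality of $\End_A(M)$, together with the global validity of (1) and the linear dimension control (2), is enough to recover both representation-embedding properties for the composite; once that is in place, $\Phi\colon\Gamma\mod\to B\mod$ witnesses the wildness of $B$. I would model this bookkeeping closely on the proof of \cite[Prop.~2.3]{EN02}.
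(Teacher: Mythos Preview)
The paper does not prove this lemma; it is quoted from \cite{EN02} and \cite{apd} and used as a black box, so there is no proof in the paper to compare your argument against.

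That said, your approach has a genuine gap. The plan to show that $\Phi=F\circ H$ is itself a representation embedding cannot work from hypotheses (1) and (2) alone. Take $A=B$ and $F=G$ the functor $M\mapsto M\oplus M$: then (1) holds (indeed $GF(M)\cong M^{\oplus4}$) and (2) holds with $C=2$, yet $F$ destroys indecomposability of every nonzero module, so $F\circ H$ is never a representation embedding, whatever $H$ is. Your sentence ``Running this together with the reflection-of-isomorphism property of $H$ and the fact that (1) holds for every $A$-module \ldots'' does not supply an argument, and the example shows no such argument exists at this level of generality. What (1) and (2) \emph{do} give you is that every indecomposable $A$-module $M$ of dimension $d$ occurs, by Krull--Schmidt applied to the decomposition of $GF(M)$, as a direct summand of $G(X)$ for some indecomposable $B$-module $X$ with $\dim X\le Cd$. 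The proof in \cite{EN02} uses this observation contrapositively through Drozd's tame--wild dichotomy: if $B$ were tame, the indecomposable $B$-modules of each bounded dimension are governed by finitely many one-parameter families, and this control transfers through $G$ and Krull--Schmidt to the indecomposable $A$-modules of dimension $d$, forcing $A$ tame as well. You should model your write-up on that contrapositive rather than trying to manufacture a direct representation embedding into $B\text{-mod}$.
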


The next lemma is the analogous statement of \cite[Lem.~5.3]{aps} for $\La_k$.

\begin{lem} \label{derived}
\begin{enumerate}
\item [(1)] If $\fqH{k}(\beta - \alpha_j)$ is wild and $\langle h_j, \La_k - \beta + \alpha_j \rangle \ge 1$, then $\fqH{k}(\beta)$ is wild.
\item[(2)] Suppose that $\fqH{k}(n \delta - \xik{k}{\pm i})$ is wild. Then
\begin{enumerate}
\item[(a)] $\fqH{k}((n+1) \delta - \xik{k}{\pm i})$ is wild,
\item[(b)] if $k\pm(i+2) \in I$, then $\fqH{k}((n+1) \delta - \xik{k}{\pm (i+2)})$ is wild.
\end{enumerate}
\end{enumerate}
\end{lem}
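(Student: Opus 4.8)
The plan is to establish Part (1) directly from the categorification functors and \cref{functor}, and then to deduce Part (2) by iterating Part (1) along an explicitly chosen sequence of simple-root additions.

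For Part (1) I would set $A = \fqH{k}(\beta - \alpha_j)$, $B = \fqH{k}(\beta)$ and take $F = F_j$, $G = E_j$, then check the two hypotheses of \cref{functor}. Hypothesis (2) is routine: since $F_j(M) = \fqH{k}(\beta)\,e(\beta - \alpha_j, j) \otimes_{\fqH{k}(\beta - \alpha_j)} M$ and the bimodule $\fqH{k}(\beta)\,e(\beta - \alpha_j, j)$ is finite-dimensional, it is generated by finitely many elements, say $C$ of them, as a right $\fqH{k}(\beta - \alpha_j)$-module, so $\dim F_j(M) \le C \dim M$ with $C$ independent of $M$. For hypothesis (1) I would apply \cref{endo} to the algebra $\fqH{k}(\beta - \alpha_j)$: the relevant integer is $l_j = \langle h_j, \La_k - (\beta - \alpha_j)\rangle = \langle h_j, \La_k - \beta + \alpha_j\rangle$, which is $\ge 1$ by hypothesis, so we are in case (1) and $E_j F_j \cong q_j^{-2} F_j E_j \oplus \bigoplus_{t=0}^{l_j - 1} q_j^{2t}\,\mathrm{id}$. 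As $l_j \ge 1$, the $t = 0$ summand exhibits $M$ (up to a grading shift, irrelevant for representation type) as a direct summand of $GF(M) = E_j F_j(M)$. Both hypotheses hold, so wildness of $A$ forces wildness of $B$.

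For Part (2) I would first observe that the two targets differ from $n\delta - \xik{k}{\pm i}$ by an element of $\rlQ^+$. For (a) the difference is exactly $\delta$, while for (b) a short computation with \eqref{xik} gives $(n+1)\delta - \xik{k}{+(i+2)} = \big(n\delta - \xik{k}{+i}\big) + \big(\alpha_0 + 2\alpha_1 + \cdots + 2\alpha_{k+i} + \alpha_{k+i+1}\big)$, together with its left--right mirror for $-i$; the condition $k \pm (i+2) \in I$ ensures this partial sum lies in the diagram. I would then reach the target by adding these simple roots one at a time, applying Part (1) at each step. To add $\alpha_m$ to the current accumulator $\gamma$ and propagate wildness, Part (1) only requires $\langle h_m, \La_k - \gamma\rangle \ge 1$; the nonvanishing (indeed wildness) of every intermediate block is then automatic by induction, and since the multiset of roots added is exactly the $\rlQ^+$-difference above, the accumulator terminates at the desired $\gamma$ by construction.

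The main obstacle is therefore purely combinatorial: choosing the order of additions so that the node being fired always has pairing $\ge 1$. Here I would use \eqref{xikev} and $\langle h_j, \delta\rangle = 0$ to record that the starting pairing vector $\big(\langle h_j, \La_k - (n\delta - \xik{k}{\pm i})\rangle\big)_j$ equals $1$ at $j = k \pm i$ and $0$ elsewhere, and that firing $\alpha_m$ changes the pairing at $j$ by $-a_{jm}$, i.e.\ lowers $p_m$ by $2$ and raises each neighbour by $1$ or $2$ according to the Cartan matrix. Starting from the single positive entry at $k \pm i$, one sweeps outward: firing that node raises both neighbours to $1$, after which one alternately fires a neighbour and returns to refill the original node, consuming the multiplicity $2$ of each interior simple root and the multiplicity $1$ of $\alpha_0, \alpha_\ell$ while always keeping a node of positive pairing available. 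For example when $\ell = 4$, $k = 1$, $i = 0$ the order $1,0,2,1,3,2,4,3$ adds $\delta$ with every move valid (returning the pairing vector to its initial value, as it must since $\langle h_j, \delta\rangle = 0$), and $1,0,2,1$ realises case (b); the general sweep is the evident analogue, identical in spirit to the argument of \cite[Lem.~5.3]{aps}. The $-i$ cases are handled by the mirror sweep toward node $0$. This completes the plan.
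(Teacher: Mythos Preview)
Your Part~(1) matches the paper's proof exactly: both use the pair $(F_j,E_j)$, check the bound on $\dim F_j(M)$ trivially, and invoke the $l_j\ge1$ case of \cref{endo} to exhibit the identity summand inside $E_jF_j$, then conclude via \cref{functor}.

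For Part~(2) you take a genuinely different route. The paper does \emph{not} iterate Part~(1) along a chain of simple-root additions. Instead it writes down an explicit Weyl group element $w$ such that
\[
w(\La_k+\xik{k}{i}-n\delta)=\La_k+\xik{k}{i+2}-(n+1)\delta+\alpha_{k+i+1}
\quad\text{(resp.\ }=\La_k+\xik{k}{i}-(n+1)\delta+\alpha_\ell\text{)},
\]
appeals to \cref{reptype} (derived equivalence under the Weyl group preserves representation type) to conclude that the corresponding block is wild, checks that the remaining pairing equals~$2$, and then applies Part~(1) exactly once; the $-i$ case is the mirror computation. Your approach avoids \cref{reptype} entirely, replacing one derived equivalence by a long sequence of $F_j$-inductions, and so is more elementary in that it uses only abelian (not triangulated) information. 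The cost is the combinatorial bookkeeping of the sweep: your description ``fire, refill, sweep outward'' is correct in the examples you give, and one can make it uniform (e.g.\ from the starting node $s=k\pm i$ first sweep $s,s{-}1,\dots,0$, then $s{+}1,\dots,\ell$, then $1,\dots,s{-}1$, then $\ell{-}1,\dots,s{+}1$, then $s$, with the obvious truncations when $s\in\{0,\ell\}$), but this still needs to be written out carefully in a full proof, including the boundary cases $s=0,\ell$ where the multiplicity of the end root in $\delta$ drops to~$1$. The paper's Weyl-group argument is shorter and avoids this case analysis; yours has the advantage of not invoking derived equivalence.
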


\begin{proof}
(1) Considering the functors
\[
F_j: \fqH{k}(\beta - \alpha_j)\text{-mod} \rightarrow \fqH{k}(\beta)\text{-mod}, \quad E_j: \fqH{k}(\beta)\text{-mod} \rightarrow \fqH{k}(\beta- \alpha_j)\text{-mod},
\]
the statement follows from \cref{functor,endo}.

(2) First, we will consider $\fqH{k}((n+1) \delta - \xik{k}{i})$.
Notice that 
\begin{align*}
\Lambda_k + \xik{k}{i+2}-(n+1)\delta&=\La_k+\xik{k}{i}+\al_{k+i+1}+2\sum^{\ell-1}_{j=k+i+2}\al_j+\al_\ell-(n+1)\delta\\
&=\La_k+\xik{k}{i}-\al_0-2\sum^{k+i}_{j=1}\al_j-\al_{k+i+1}-n\delta.
\end{align*}
For $0\le k+i \le \ell-1$ and $n\in \bbz_{\ge0}$, we compute
\begin{align*}
\Lambda_k + \xik{k}{i+2}- (n+1)\delta + \alpha_{k+i+1} &= r_{k+i}r_{k+i-1}\dots r_{1} r_{0} r_{1} \dots r_{k+i} (\La_k + \xik{k}{i} - n\delta),\\
\La_k + \xik{k}{i} - (n+1)\delta + \alpha_\ell &= r_{\ell-1}r_{\ell-2}\dots r_{1} r_{0} r_{1} \dots r_{k+i} (\La_k + \xik{k}{i} - n\delta).
\end{align*}
Hence $\fqH{k}((n+1)\delta - \xik{k}{i+2} - \alpha_{k+i+1})$ and $\fqH{k}((n+1)\delta - \xik{k}{i} - \alpha_{\ell})$ are wild by \cref{reptype} and the assumption that $\fqH{k}(n\delta-\xik{k}{\pm i})$ is. Moreover, we also have that
\begin{align*}
&\langle h_{k+i+1}, \Lambda_k + \xik{k}{i+2} - (n+1)\delta + \alpha_{k+i+1} \rangle = 2,\\
&\langle h_{\ell}, \Lambda_k + \xik{k}{i} - (n+1)\delta + \alpha_\ell \rangle = 2,
\end{align*}
and now we apply (1) to arrive at the desired conclusion.

Similarly, for $k+i=\ell$ we need to consider
\[
\Lambda_k + \xik{k}{i}- (n+1)\delta + \alpha_0 = r_{1}r_{2} \cdots r_{\ell} (\Lambda_k + \xik{k}{i} - n\delta). 
\]
and we also have to check
\[
\langle h_0, \Lambda_k + \xik{k}{i} - (n+1)\delta + \alpha_0 \rangle = 2.
\]
Using \cref{reptype} and $(1)$ again, $(2)$ follows for $\fqH{k}(n\delta - \xik{k}{i})$.

Next, we look at $\fqH{k}((n+1) \delta - \xik{k}{-i})$. Here we note that
\begin{align*}
\Lambda_k + \xik{k}{-(i+2)}-(n+1)\delta&=\La_k+\xik{k}{-i}+\al_{0}+2\sum^{k-(i+2)}_{j=1}\al_j+\al_{k-(i+1)}-(n+1)\delta\\
&=\La_k+\xik{k}{-i}-\al_{k-(i+1)}-2\sum^{\ell-1}_{j=k-(i+2)}\al_j-\al_\ell-n\delta.
\end{align*}
The proof is essentially the same as for $k+(i+2)$, but in this case the Weyl group generators $r_i$ will act in reverse order.
For $1\le k-i \le \ell$ and $n\in \bbz_{\ge0}$, we compute
\begin{align*}
\Lambda_k + \xik{k}{-(i+2)} - (n+1)\delta + \alpha_{k-(i-1)} &= r_{k-i}r_{k-(i-1)}\dots r_{\ell-1} r_{\ell} r_{\ell-1} \cdots r_{k-i} (\Lambda_k + \xik{k}{-i} - n\delta),\\
\Lambda_k + \xik{k}{-i} - (n+1)\delta + \alpha_0 &= r_{1}r_{2}\dots r_{\ell-1} r_{\ell} r_{\ell-1} \dots r_{k-i} (\Lambda_k + \xik{k}{-i} - n\delta).
\end{align*}
Moreover, we also have that
\begin{align*}
&\langle h_{k-(i+1)}, \Lambda_k + \xik{k}{-(i+2)}- (n+1)\delta + \alpha_{k-(i-1)} \rangle = 2,\\
&\langle h_{0}, \Lambda_k + \xik{k}{-i} - (n+1)\delta + \alpha_0 \rangle = 2.
\end{align*}
Finally, for $k-i=0$ we need to consider
\[
\Lambda_k + \xik{k}{-k} - (n+1)\delta + \alpha_\ell = r_{\ell-1}r_{\ell-2} \cdots r_{1}r_0 (\Lambda_k + \xik{k}{-k}- n\delta). 
\]
and we also have to check
\[
\langle h_\ell, \Lambda_k + \xik{k}{-i} - (n+1)\delta + \alpha_\ell \rangle = 2.
\]
Thus for $\fqH{k}(n\delta - \xik{k}{-i})$ $(2)$ follows by the same reasoning as for $k+(i+2)$.
\end{proof}

\begin{thm}\label{twodelta}
The algebra $R^{\La_k}(2\delta)$ is wild.
\end{thm}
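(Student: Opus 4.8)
The plan is to combine the wild base cases already in hand with the propagation \cref{derived}, splitting on the value of $\ell$. Since $\xik{k}{0}=0$, we have $\delta=\delta-\xik{k}{0}$ and $2\delta=2\delta-\xik{k}{0}$, so $2\delta$ is exactly the $m=2$, $i=0$ maximal-weight representative. For $\ell\ge 3$ and any $0\le k\le \ell$, \cref{onedelta} already gives that $\fqH{k}(\delta)$ is wild, so applying \cref{derived}(2)(a) with $n=1$ and $i=0$ immediately yields that $\fqH{k}(2\delta)$ is wild. This disposes of every $\ell\ge 3$ with no new computation.

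It remains to treat $\ell=2$, where by \cref{cor:blockisom} I may assume $k\in\{0,1\}$. For $k=0$ the algebra $\fqH{0}(2\delta)$ is already known to be wild by Ariki--Park \cite{apc}. The genuinely new case is $k=1$, where \cref{onedelta} does not apply and in fact $\fqH{1}(\delta)$ is \emph{tame} by \cref{tame}. Moreover, by \cref{prop:maximal} the only dominant maximal weight of $V(\La_1)$ in type $C^{(1)}_2$ is $\La_1$ itself, as $1\pm i\notin I$ for every even $i>0$; hence by \cref{reptype} every nonzero block $\fqH{1}(\beta)$ is derived equivalent to some $\fqH{1}(m\delta)$, and there is no wild block of smaller height that \cref{derived} could feed into $2\delta$. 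So I must establish the wildness of $\fqH{1}(2\delta)$ directly.

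To do so I follow the idempotent strategy of \cref{onedelta} and \cref{xik4}. Working in the Fock space $\mathcal F(\kappa)$ with $\La_\kappa=\La_1$, I enumerate the partitions of $8$ with content $2\delta=2\alpha_0+4\alpha_1+2\alpha_2$, pick two residue sequences $\nu_1,\nu_2$ realised by standard tableaux so that $e_1=e(\nu_1)$ and $e_2=e(\nu_2)$ are nonzero orthogonal idempotents, and compute $\dim_q e_i\fqH{1}(2\delta)e_j$ via \cref{thm:dim}. The aim is to choose $\nu_1,\nu_2$ so that $\dim_q e_i\fqH{1}(2\delta)e_i=1+c_iq^2+\cdots$ with $c_i\ge 1$ and $\dim_q e_i\fqH{1}(2\delta)e_j\in q^2\bbz_{\ge 0}[q]$ for $i\ne j$. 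Then \cref{trick} shows that the quiver of $e\fqH{1}(2\delta)e$ with $e=e_1+e_2$ carries a loop at each of its two vertices together with arrows between them, and \cite[I.10.8(iv)]{er90} forces wild representation type, exactly as in \cref{onedelta}.

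The main obstacle is precisely this explicit $\ell=2$, $k=1$ computation: one must exhibit residue sequences whose idempotents are nonzero and whose diagonal graded dimensions genuinely carry a $q^2$-term (the loop), and then verify through the tableau count of \cref{thm:dim} that the off-diagonal entries still satisfy the hypotheses of \cref{trick}. Once suitable $\nu_1,\nu_2$ are located this reduces to a finite, if slightly delicate, enumeration of standard tableaux and their degrees, after which the wildness conclusion follows from the same machinery as in the $\delta$ and $2\delta-\xik{k}{\pm4}$ cases.
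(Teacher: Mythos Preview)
Your approach is essentially the same as the paper's: split on $\ell$, apply \cref{derived}(2)(a) to \cref{onedelta} for $\ell\ge3$, and for $\ell=2$ handle $k=1$ directly via two idempotents and \cref{trick}. The paper carries out exactly this plan, taking $e_1=e(10121012)$ and $e_2=e(12012101)$ and computing $\dim_q e_i\fqH{1}(2\delta)e_i=1+2q^2+\cdots$ and $\dim_q e_1\fqH{1}(2\delta)e_2=q^2+2q^4+q^6$, which is precisely the kind of data your sketch asks for.
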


\begin{proof}
If $\ell\ge 3$, the statement follows from \cref{onedelta} by applying \cref{derived} with $i = 0$. Now assume $\ell=2$. Let $e=e_1+e_2$ and consider $A=eR^{\La_1}(2\delta)e$ where 
\begin{equation*}
        e_1=e(10121012) \hspace{1cm}\text{and}\hspace{1cm}
        e_2=e(12012101).
\end{equation*}

Then we have the following graded dimensions:
\begin{align*}
    \dim_qe_1R^{\La_1}(2\delta)e_1&=1+2q^2+2q^4+2q^6+q^8\\
    \dim_qe_2R^{\La_1}(2\delta)e_2&=1+2q^2+3q^4+2q^6+q^8 \\
    \dim_qe_1R^{\La_1}(2\delta)e_2&=\dim_qe_2R^{\La_1}(2\delta)e_1=q^2+2q^4+q^6.
\end{align*}

Then $R^{\La_1}(2\delta)$ is wild by the same reasoning as in \cref{onedelta}.
\end{proof}

\begin{thm}\label{2xik2}
The algebra $R^{\La_k}(2\delta-\xik{k}{\pm 2})$ is wild.
\end{thm}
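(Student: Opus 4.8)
The plan is to reduce the statement to the wildness of $\fqH{k}(\delta)$ already established in \cref{onedelta}, using the inductive lemma \cref{derived} in the same spirit as the proof of \cref{twodelta}. Since the notation $\xik{k}{\pm 2}$ presupposes $k\pm 2\in I$, the block is only defined under that hypothesis, and I would treat the base case $k=0$ separately from $k\neq 0$.

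First I would dispose of $k=0$: here only $\xik{0}{+2}$ is defined, and $\fqH{0}(2\delta-\xik{0}{+2})$ is covered by Ariki--Park's classification in \cite{apc}, just as the $k=0$ instances of \cref{onedelta} and \cref{xik4} were. For $k\neq 0$ I would first apply \cref{cor:blockisom} to assume $0\le k\le \ell/2$; this also exchanges the $+2$ and $-2$ cases via the Dynkin flip $\sigma(i)=\ell-i$, so it suffices to treat them together. The key bookkeeping observation is that $k\neq 0$ together with $k\pm 2\in I$ and $k\le \ell/2$ forces $\ell\ge 3$: for the $+2$ case, $k+2\le\ell$ with $k\ge 1$ gives $\ell\ge 3$, and for the $-2$ case, $k\ge 2$ with $k\le\ell/2$ gives $\ell\ge 2k\ge 4$. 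Hence \cref{onedelta} applies and $\fqH{k}(\delta)$ is wild.

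With $\fqH{k}(\delta)$ wild in hand, I would invoke part (2)(b) of \cref{derived} with $n=1$ and $i=0$. Since $\xik{k}{0}=0$, its hypothesis that $\fqH{k}(n\delta-\xik{k}{\pm i})$ be wild is exactly the statement that $\fqH{k}(\delta)$ is wild, and its conclusion, valid because $k\pm 2\in I$, is precisely that $\fqH{k}(2\delta-\xik{k}{\pm 2})$ is wild. This completes the argument.

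The only subtle point, and the one deserving care, is ruling out the exceptional regime $\ell=2$, where $\fqH{k}(\delta)$ is merely tame by \cref{tame} rather than wild; the case analysis above shows that $\ell=2$ forces $k=0$ (equivalently $k=\ell$ by symmetry), which is already handled by the citation to \cite{apc}. I do not anticipate any genuine obstacle beyond this bookkeeping, since part (2)(b) of \cref{derived} is tailor-made to raise $\fqH{k}(\delta)$ to $\fqH{k}(2\delta-\xik{k}{\pm 2})$ --- in contrast to \cref{xik4}, where the analogous reduction would require the wildness of a defect-one or defect-zero block and therefore fails, forcing the explicit idempotent computation used there.
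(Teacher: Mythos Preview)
Your proposal is correct and rests on the same ingredients as the paper's proof: the $k=0$ case by citation to \cite{apc}, the reduction to $k\le\ell/2$ via \cref{cor:blockisom}, the observation that then $\ell\ge 3$ so that $\fqH{k}(\delta)$ is wild by \cref{onedelta}, and finally a bootstrap using the machinery behind \cref{derived}. The only difference lies in this last step. The paper carries out the bootstrap by hand: it applies part~(1) of \cref{derived} repeatedly to add the simple roots $\alpha_k,\alpha_{k\pm 1},\ldots$ one at a time to $\delta$, checking positivity of each $\langle h_j,\La_k-\cdots\rangle$ along the way, and then invokes an explicit Weyl word together with \cref{reptype} to reach $2\delta-\xik{k}{\pm 2}$. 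You instead notice that this is precisely the content of \cref{derived}(2)(b) with $n=1$ and $i=0$ (using $\xik{k}{0}=0$), and cite it directly. Your packaging is a genuine economy --- the paper effectively re-derives that special case of its own lemma rather than invoking it --- but there is no difference in the underlying argument.
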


\begin{proof}
If $k=0$, this is proved in \cite[Lem.~5.4]{apc}.
Suppose $k\neq 0$ and $k\le \ell/2$.
Then $k+2\le \ell$ implies that $\ell \ge 3$ and $k-2 \in I$ implies that $\ell\ge 2k \ge 4$ and so we may assume that $\ell \ge 3$. 
Notice that for $1\le k \le \ell-1$ we have that 
\begin{align*}
    2\delta-\xik{k}{2}&=\delta+\al_0+2\sum^k_{j=1}\al_j+\al_{k+1}\\
    2\delta-\xik{k}{-2}&=\delta+\al_{k-1}+2\sum^{\ell-1}_{j=k}\al_j+\al_{\ell}.
\end{align*}
By \cref{onedelta}, we already know that $\fqH{k}(\delta)$ is wild and we also see that
\begin{align*}
&\langle h_k, \La_k - \delta \rangle,
\langle h_{k+1}, \La_k - \delta-\al_k \rangle,
\dots,
\langle h_{\ell-1}, \La_k - \delta-\al_k-\dots-\al_{\ell-2} \rangle,\\
&\langle h_\ell, \La_k - \delta-\al_k-\dots-\al_{\ell-1} \rangle,
\langle h_{k-1}, \La_k - \delta-\al_k-\dots-\al_{\ell} \rangle
\end{align*}
are all positive, so we have that $\fqH{k}(\delta+\al_{k-1}+\al_k+\dots+\al_\ell)$ is also wild by \cref{derived}.
Finally, direct computation shows that
\begin{align*}
\La_k-(\delta+\al_{k-1}+2\sum^{\ell-1}_{j=k}\al_j+\al_{\ell})&=
    \La_k-\delta-\al_{k-1}-2\al_k-2\al_{k+1}-\dots-2\al_{\ell-1}-\al_\ell\\
    &=r_{\ell-1}r_{\ell-2}\dots r_k(\La_k-\delta-\al_{k-1}-\al_k-\dots-\al_\ell).
\end{align*}
Hence $R^{\La_k}(2\delta-\xik{k}{-2})$ is wild.

Similarly, for $2\delta-\xik{k}{2}$ it is easy to see that
\begin{align*}
&\langle h_k, \La_k - \delta \rangle,
\langle h_{k-1}, \La_k - \delta-\al_k \rangle,
\dots,
\langle h_1, \La_k - \delta-\al_2-\dots-\al_k \rangle,\\
&\langle h_0, \La_k - \delta-\al_1-\dots-\al_k \rangle,
\langle h_{k+1}, \La_k - \delta-\al_0-\dots-\al_{k} \rangle
\end{align*}
are all positive, thus the algebra $\fqH{k}(\delta+\al_0+\al_1+\dots+\al_k)$ is wild.
Using direct computation again, we have that
\begin{align*}
(\La_k-\delta-\al_{0}-\al_1-2\al_2-\dots-2\al_{k}-\al_{k+1})\\
    &=r_2r_3\dots r_k(\La_k-\delta-\al_0-\al_1-\dots-\al_{k+1})
\end{align*}
and
that
\begin{equation*}
\langle h_1, \La_k-\delta-\al_0-\al_1+2\sum^{k}_{j=2}\al_j+\al_{k+1}\rangle=2
\end{equation*}
thus $R^{\La_k}(2\delta-\xik{k}{2})$ is wild by \cref{derived} and we have proved the statement.
\end{proof}

\begin{thm}\label{main}
Let $0\le k+i \le \ell$ or $0\le k-i \le \ell$ for some even $i \in I$.
% Changed the minus sign (see APC errata) 
For $\beta \in \weyl(\Lambda_k + \xik{k}{\pm i})$ and $m\ge i/2$, the block algebra $R^{\La_k}( \La_k - \beta +  m\delta )$ of type $C_\ell^{(1)}$ is
\begin{enumerate}
\item a simple algebra if $i=m=0$;
\item of finite representation type if $m=1$ and $i=2$;
\item of tame representation type if $i=0$, $m=1$ and $\ell=2$; and
\item of wild representation type otherwise.
\end{enumerate}
\end{thm}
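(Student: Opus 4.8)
The plan is to reduce every block $R^{\La_k}(\La_k - \beta + m\delta)$ to one of the maximal weight representatives $\fqH{k}(m\delta - \xik{k}{\pm i})$ and then read off the representation type from the results already established for those. Since $\beta \in \weyl(\La_k + \xik{k}{\pm i})$, I would write $\beta = w(\La_k + \xik{k}{\pm i})$ for some $w \in \weyl$; using that $\delta$ is $\weyl$-invariant, applying \cref{reptype} with $w^{-1}$ then shows that $R^{\La_k}(\La_k - \beta + m\delta)$ has the same number of simple modules and the same representation type as $\fqH{k}(m\delta - \xik{k}{\pm i})$, whose defect is $2m - i/2$ by \cref{lem:defcalc}. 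This collapses the whole classification onto the single family indexed by the even integer $i \ge 0$ and $m \ge i/2$.

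Next I would dispatch the three non-wild cases directly. When $i = m = 0$ the defect is $0$, so \cref{prop:defect0} yields a simple algebra, proving (1). When $m = 1$ and $i = 2$ the defect is $1$, and \cref{prop:xik2} (refined by \cref{thm:xik2}) shows $\fqH{k}(\delta - \xik{k}{\pm 2})$ is of finite representation type, proving (2). When $i = 0$, $m = 1$ and $\ell = 2$, \cref{tame} gives that $\fqH{k}(\delta)$ is tame, proving (3).

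It then remains to prove wildness for every admissible even $i \ge 0$ and $m \ge i/2$ outside these three exceptional pairs, and here the strategy is a double induction driven by \cref{derived}. The base cases are the four wild algebras already in hand: $\fqH{k}(\delta)$ for $\ell \ge 3$ (\cref{onedelta}), $\fqH{k}(2\delta)$ (\cref{twodelta}), $\fqH{k}(2\delta - \xik{k}{\pm 2})$ (\cref{2xik2}), and $\fqH{k}(2\delta - \xik{k}{\pm 4})$ (\cref{xik4}). From these, \cref{derived}(2)(a) raises $m$ by one at fixed $i$, so it propagates wildness to all $\fqH{k}(m\delta - \xik{k}{\pm i})$ with $m \ge 2$ for each $i \in \{0, 2, 4\}$ (and to all $m \ge 1$ when $i = 0$, $\ell \ge 3$); meanwhile \cref{derived}(2)(b) raises $i$ by two and $m$ by one simultaneously, so beginning from $\fqH{k}(2\delta - \xik{k}{\pm 4})$ it reaches the minimal representative $\fqH{k}(\frac{i}{2}\delta - \xik{k}{\pm i})$ for every even $i \ge 6$ with $k \pm i \in I$, after which (2)(a) again fills in all larger $m$. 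Together these two moves cover precisely the claimed wild region, establishing (4).

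The hard part will be the bookkeeping: confirming that the two induction steps of \cref{derived} reach every admissible pair $(i, m)$ without leaving gaps, while respecting the range condition $k \pm i \in I$ at each application so that the partitions and idempotents underlying the base lemmas stay well-defined. Some extra care is needed at $i = 0$, where $\xik{k}{0} = 0$ makes the two branches of \cref{derived}(2) degenerate into the single family $\fqH{k}(m\delta)$, and in the $\ell = 2$ case, which is tame at $(i,m) = (0,1)$ but wild for $m \ge 2$ and must therefore be seeded from \cref{twodelta} rather than from the $\ell \ge 3$ base case \cref{onedelta}.
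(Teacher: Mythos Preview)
Your proposal is correct and follows essentially the same route as the paper: reduce via \cref{reptype} to the maximal weight representatives $\fqH{k}(m\delta-\xik{k}{\pm i})$, handle cases (1)--(3) by \cref{prop:defect0}, \cref{thm:xik2} and \cref{tame} respectively, and for (4) seed the induction with \cref{onedelta}, \cref{twodelta}, \cref{2xik2}, \cref{xik4} and propagate via \cref{derived}(2)(a),(b). Your more explicit bookkeeping of which pairs $(i,m)$ are reached by which moves, and the observation that the $\ell=2$ branch must be seeded from \cref{twodelta} rather than \cref{onedelta}, are exactly the checks the paper's terse proof leaves implicit.
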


\begin{proof}
If $k=0$ or $\ell$, the result follows by \cite{apc}.
If $1\le k \le \ell-1$, $(1)$ comes from \cref{prop:defect0}, $(2)$ comes from \cref{thm:xik2}, $(3)$ is proved in \cref{tame} and $(4)$ follows by \cref{onedelta} and applying \cref{derived} to \cref{xik4,twodelta,2xik2}.
\end{proof}

\begin{rem}
We note here that we can phrase things in terms of defect. Let $d$ be the defect of $R^{\La_k}( \La_k - \beta +  m\delta )$ for $m\ge i/2$, $\beta \in \weyl(\Lambda_k + \xik{k}{\pm i})$ with even $i \in I$ such that $0\le k+i \le \ell$ or $0\le k-i \le \ell$. Then the block algebra $R^{\La_k}( \La_k - \beta +  m\delta )$ of type $C_\ell^{(1)}$ is
\begin{enumerate}
\item a simple algebra if $d=0$;
\item of finite representation type if $d=1$;
\item of tame representation type if $\ell=2$ and $d=2$; and
\item of wild representation type otherwise.
\end{enumerate}
\end{rem}

\begin{rem}
    We also summarise the results of Ariki--Ijima--Park, Ariki--Park \cite{APA1,APA2,apc,apd} and ours on the representation type of block algebras in level one in terms of defect.
    Let $d$ denote the defect of the respective block algebra.
    \begin{itemize}
        \item {If $\La=\La_0$} in type $A^{(2)}_{2\ell}$,
$d=0$ implies simple, $d=1$ implies finite, wild otherwise (tame representation type does not occur here).
\item If $\La=\La_0$ in type $D^{(2)}_{\ell+1}$,
$d=0$ implies simple, $d=1$ implies finite, $d=2$ implies tame, wild otherwise.
\item If $\La=\La_k$ in type $A^{(1)}_\ell$,
$d=0$ implies simple, $d=1$ implies finite, $d=2$ and $\ell=1$ implies tame, wild otherwise. (This follows from the fact that in type $A^{(1)}_\ell$, $\fqH{0}(\beta)\cong\fqH{k}(\beta)$ for any $0\le k\le \ell$.)
\item If $\La=\La_k$ in type $C^{(1)}_\ell$, $d=0$ implies simple, $d=1$ implies finite, $d=2$ and $\ell=2$ implies tame, wild otherwise.
    \end{itemize}
\end{rem}

\bibliographystyle{amsalpha}
\addcontentsline{toc}{section}{\refname}
\bibliography{master}

\end{document}